\newtheorem{theorem}{Theorem}
\newtheorem{proposition}[theorem]{Proposition}
\newtheorem{corollary}[theorem]{Corollary}
\newtheorem{lemma}[theorem]{Lemma}
\theoremstyle{definition}
\newtheorem{definition}[theorem]{Definition}
\newtheorem{remark}[theorem]{Remark}
\newtheorem*{theorem*}{Theorem}
\newtheorem*{corollary*}{Corollary}
\newtheorem*{proposition*}{Proposition}
\newtheorem*{remark*}{Remark}
\numberwithin{equation}{section}
\numberwithin{theorem}{section}
\author{Stephen Cantrell}
\address{Mathematics Institute, University of Warwick,
Coventry CV4 7AL, U.K.}
\email{S.J.Cantrell@warwick.ac.uk}
\title{Typical behaviour along geodesic rays in hyperbolic groups}
\begin{document}
\maketitle
\begin{abstract}
In this note we study the limiting behaviour of real valued functions on hyperbolic groups as we travel along typical geodesic rays in the Gromov boundary of the group. Our results apply to group homomorphisms, certain quasimorphisms and to the displacement functions associated to convex cocompact group actions on CAT$(-1)$ metric spaces.
\end{abstract}
\section{Introduction}
Let $G$ by a non-elementary hyperbolic group and suppose that $G$ acts cocompactly (or convex cocompactly) by isometries on a complete hyperbolic geodesic metric space $(X,d)$. Fix a finite generating set $S$ for $G$ and an origin $o$ for $X$. Let $C(G)$ denote the Cayley graph of $G$ with respect to $S$ and write $\partial G$ for the Gromov boundary of $G$. By the \u Svarc-Milnor Lemma, there exists constants $C_1,C_2>0$ such that,
for any infinite geodesic ray $\gamma$ based at the identity in $C(G)$,
$$ C_1 n\le d(o,\gamma_no) \le C_2n$$
for all $n \ge 1$. Here $\gamma_n$ denotes the end point of $\gamma$ after $n$ steps. This inequality describes the coarse behaviour of the displacement function $g\mapsto d(o,go)$ along geodesic rays. It is then natural to ask whether we can describe more precisely how the displacement grows along typical geodesic rays in $\partial G$? The Patterson-Sullivan measure provides us with a natural way of quantifying typicality in this setting. We say that a property exhibited by elements of $\partial G$ is typical if it holds on a full Patterson-Sullivan measure set. \\ \indent
Gekhtman, Taylor and Tiozzo  asked the above question in a more general setting. They prove the following theorem in \cite{lox}. Let $\nu$ denote the Patterson-Sullivan measure obtained as the weak star limit
$$\lim_{n\to\infty} \frac{\sum_{|g|\le n} \lambda^{-|g|}\delta_g}{\sum_{|g|\le n} \lambda^{-|g|}},$$
where $\delta_g$ denotes the Dirac measure based at $g \in G$ and $|g|$ denotes the word length of $g$. We write $[\gamma] \in \partial G$ for the element in $\partial G$ that contains $\gamma$.
\begin{proposition} [Theorem $1.3$ \cite{lox}]
Suppose a hyperbolic group $G$ has a non-elementary action by isometries on a separable, hyperbolic geodesic metric space $X$. Then, there is $L>0$ such that for every $x \in X$ and $\nu$ almost every $[\widetilde{\gamma}] \in \partial G$,
$$\lim_{n\to\infty} \frac{d_X(x,\gamma_n x)}{n} = L,$$
where $\gamma$ is any geodesic ray in $[\widetilde{\gamma}]$. 
\end{proposition}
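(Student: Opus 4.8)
The plan is to present $n\mapsto d_X(x,\gamma_n x)$ as a subadditive cocycle over a measure preserving system built from the Patterson--Sullivan measure $\nu$, to extract a $\nu$-a.e.\ limit via Kingman's subadditive ergodic theorem, and then to upgrade that limit to a constant using ergodicity of the $G$-action on $(\partial G,\nu)$. Two reductions come first. Since $G$ acts on $X$ by isometries, $|d_X(x,gx)-d_X(o,go)|\le 2d_X(x,o)$ for all $g\in G$, so any such limit is independent of $x$ and we may take $x=o$. Moreover, two geodesic rays based at the identity in the hyperbolic graph $C(G)$ defining the same point of $\partial G$ remain within bounded Hausdorff distance, so if $[\gamma]=[\gamma']$ then $\gamma_n=\gamma'_{m(n)}u_n$ with $|m(n)-n|$ and $|u_n|$ bounded uniformly in $n$; as the action is isometric, $\sup_n|d_X(o,\gamma_no)-d_X(o,\gamma'_no)|<\infty$, so the limit does not depend on the ray chosen in a boundary class (and the case of rays not based at $e$ reduces to this after translating the basepoint to $e$). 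Hence it suffices to show that $L_\xi:=\lim_{n}d_X(o,g^\xi_no)/n$ exists for $\nu$-a.e.\ $\xi\in\partial G$, where $\gamma^\xi$ is a chosen geodesic ray from $e$ with $[\gamma^\xi]=\xi$ and vertices $g^\xi_n$.

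Next I would install symbolic dynamics. By Cannon's theorem, a geodesic combing of $G$ (say shortlex) is recognised by a finite-state automaton whose space of infinite paths forms a one-sided subshift of finite type $(\Sigma,\sigma)$; a point $\omega\in\Sigma$ spells a geodesic ray from $e$ with vertices $g_n(\omega)$, the map $\pi\colon\Sigma\to\partial G$ sending $\omega$ to the endpoint of that ray is continuous and surjective, and $\sigma$ acts by passing to the geodesic tail, so that $g_m(\sigma^n\omega)=g_n(\omega)^{-1}g_{n+m}(\omega)$. By Coornaert's estimates $\nu$ is Ahlfors regular, $\nu(\text{shadow of }g)\asymp\lambda^{-|g|}$, and comparing shadows with cylinders shows that $\pi_*\mu$ is comparable to $\nu$ (mutually absolutely continuous with bounded density), where $\mu$ is the $\sigma$-invariant Parry, i.e.\ maximal-entropy, measure of $\Sigma$. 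Now set $F_n(\omega)=d_X(o,g_n(\omega)o)$: then $F_1$ is bounded, hence in $L^1(\mu)$, and the triangle inequality together with left-invariance of the metric yields the subadditivity $F_{n+m}(\omega)\le F_n(\omega)+F_m(\sigma^n\omega)$. Kingman's theorem then produces a $\sigma$-invariant $\overline{F}\in L^1(\mu)$ with $F_n/n\to\overline{F}$ $\mu$-a.e.; since $\pi_*\mu$ is comparable to $\nu$ and the set of $\omega$ on which $(F_n/n)$ fails to converge is $\pi$-saturated (by the ray-independence just established), it follows that $L_\xi$ exists for $\nu$-a.e.\ $\xi\in\partial G$.

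It remains to see that $L_\xi$ is $\nu$-a.e.\ equal to a positive constant. Comparing, exactly as in the first reduction, the geodesic rays from $e$ to $\xi$ and from $e$ to $h\xi$ — these are asymptotic, hence eventually at bounded distance after a bounded index shift — gives $L_{h\xi}=L_\xi$ for every $h\in G$; since the $G$-action on $(\partial G,\nu)$ is ergodic, $L_\xi$ is $\nu$-a.e.\ a constant $L\ge 0$. For $L>0$ one invokes non-elementarity of the action on $X$: it contains a loxodromic isometry $w\in G$, necessarily of infinite order in $G$, so that $d_X(o,w^ko)\sim k\,\ell_X(w)$ with $\ell_X(w)>0$ while $w^k$ stays within bounded distance of a geodesic of length $\asymp k$ in $C(G)$; using mixing of $\mu$ one shows that a definite $\mu$-proportion of geodesic rays track long powers of conjugates of $w$ often enough to force $\int_\Sigma F_n\,d\mu\ge cn$ for some $c>0$, whence $L\ge c>0$.

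I expect the crux to be the second paragraph: the Patterson--Sullivan measure is itself only quasi-invariant under the tail shift, so the real work lies in manufacturing an honestly shift-invariant, comparable measure by combining Cannon's coding with Coornaert's regularity estimates — equivalently, in recognising $\nu$ as the Gibbs/Parry measure of the automaton. After that, the subadditivity, Kingman's theorem, and the ergodicity upgrade are routine, and the positivity of $L$ is the only remaining genuinely action-dependent point.
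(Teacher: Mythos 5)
First, a point of orientation: the paper does not prove Proposition~1.1 at all — it is quoted from Gekhtman, Taylor and Tiozzo \cite{lox}, and the paper explicitly describes their method as one of random walks on the loop graph of the Cannon automaton (a Markov chain/renewal argument). Your plan — Cannon coding, a shift-invariant measure on $\Sigma_A$, Kingman's subadditive ergodic theorem, then an ergodicity upgrade — is therefore a genuinely different route, and it is much closer in spirit to the symbolic approach this paper uses for its own Theorems~1.2 and~1.4. The reductions in your first paragraph and the subadditivity $F_{n+m}(\omega)\le F_n(\omega)+F_m(\sigma^n\omega)$ are correct, and Kingman is indeed the right replacement for Birkhoff here, since a general non-elementary displacement need not be an honest Birkhoff sum (Condition~(1) need not hold outside the $\mathrm{CAT}(-1)$ setting the paper treats).

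The gap is precisely where you guessed it would be. Your claim that $\pi_*\mu$ and $\nu$ are ``mutually absolutely continuous with bounded density,'' i.e.\ that $\nu$ can be ``recognised as the Gibbs/Parry measure of the automaton,'' is not what is true and is not established by your sketch. On the shift itself the relevant measures are mutually singular: the lift $\widehat\nu$ of $\nu$ sits on $Y=\{x:x_0=\ast\}$, whereas any measure of maximal entropy is carried by the recurrent (maximal) components $\cup_i\Sigma_{B_i}$, which miss $Y$ because $\ast$ has no incoming edges. The relationship that actually holds — and this is the technical content of Section~4 of this paper, following Calegari--Fujiwara — is through Ces\`aro averages of shifts, $\|\tfrac1n\sum_{k\le n}\sigma_*^k\widehat\nu-\mu\|_{TV}=O(n^{-1})$ (Proposition~4.6), together with the cylinder-wise absolute continuity $\sigma_*^k\widehat\nu|_{[v]}=\alpha_v^k\,\mu|_{[v]}$ (Lemma~4.5). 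Your Kingman argument can be repaired using those ingredients: apply Kingman to $\mu$ on $\cup_i\Sigma_{B_i}$; note the exceptional set in $\partial G$ is $G$-invariant; and transfer $\mu$-nullity to $\widehat\nu$-nullity using Lemma~4.5 and the decomposition into the sets $A_j$ of Definition~4.7, with $G$-invariance absorbing the translation $\pi(\sigma^k y)=g_k(y)^{-1}h(y)$ produced by shifting — but that chain is the real work, not a one-line comparability claim. A second, smaller, gap is the positivity $L>0$: producing a loxodromic and waving at ``mixing of $\mu$'' forcing $\int F_n\,d\mu\ge cn$ is far from a proof; linear progress for non-elementary actions is a substantive point in \cite{lox} and needs an explicit argument (e.g.\ via shadows of high powers of a loxodromic, or strict subadditivity and a Kesten-type criterion).
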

\noindent To prove this, Gekhtman, Taylor and Tiozzo exploit the strongly Markov structure of $G$. That is, they use the fact that there exists a finite directed graph $\mathcal{G}$ that in some sense encodes the key properties of $G$. They obtain the above theorem by studying random walks on the loop graph associated to $\mathcal{G}$.\\ \indent This is one way to exploit the structure provided by $\mathcal{G}$. It is however possible to make use of the strongly Markov property in a different way. The graph $\mathcal{G}$ gives rise to a dynamical system $(\Sigma, \sigma:\Sigma \to \Sigma)$ known as a subshift of finite type. We can embed $G$ into $\Sigma$ via a function $i:G \to \Sigma$ and use this to translate questions about the displacement function on $G$ to questions about $\Sigma$ and a suitable function $f: \Sigma \to \mathbb{R}$. The connection between $G$ and $\Sigma$ is exploited by Pollicott and Sharp in \cite{45}. They prove an almost sure invariance principle, as well as other limit laws, for the displacement function associated to the action of surface groups and convex cocompact free groups on the hyperbolic plane. In \cite{me} similar ideas are used to derive limit laws for real-valued functions satisfying two conditions named, in that paper, by Condition $(1)$ and Condition $(2)$. Real valued group homomorphisms, certain quasimorphisms as well as the displacement function associated to convex cocompact group actions on $\text{CAT}(-1)$ metric spaces satisfy these conditions.\\ \indent
This leads us to ask whether Proposition $1.1$ remains true if we replace the displacement function with a different real valued function. Furthermore, can we formulate a more precise statement describing how these functions behave along geodesic rays? These are the questions that we consider in this paper. Our main theorems are the following. We will define and discuss Condition $(1)$ and Condition $(2)$ in Section $3$. Let $\nu$ denote the Patterson-Sullivan measure as defined above.
\begin{theorem}\label{thm1}
Let $G$ be a non-elementary hyperbolic group equipped with a finite generating set $S$. Suppose that $\varphi:G\to\mathbb{R}$ satisfies Condition $(1)$ and Condition $(2)$. Then there exists $\Lambda \in \mathbb{R}$ such that for $\nu$ almost every $[\widetilde{\gamma}]  \in \partial G$,
$$ \lim_{n\to\infty} \frac{\varphi(\gamma_n)}{n} = \Lambda,$$
for any $\gamma$ belonging to $[\widetilde{\gamma}]$.
\end{theorem}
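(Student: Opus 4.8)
\emph{Proof proposal.} The plan is to transfer the whole question to the symbolic dynamical system attached to $G$. Recall (as in \cite{45, me}) that the strongly Markov structure of $G$ supplies a finite directed graph $\mathcal{G}$, an associated subshift of finite type $(\Sigma,\sigma)$, and a coding under which infinite geodesic rays from the identity in $C(G)$ correspond to the infinite admissible paths of $\mathcal{G}$, with the word length $|g|$ recovered as the length of the coding path; one may arrange the coding via the shortlex-least geodesic, so that it is essentially a bijection between $\Sigma$ (minus a null exceptional set) and the set of boundary points together with their distinguished geodesic ray. The first step is to invoke the representation theory built in \cite{me} from Conditions $(1)$ and $(2)$: these guarantee a H\"older continuous $f:\Sigma\to\mathbb{R}$ such that, writing $S_nf=\sum_{k=0}^{n-1}f\circ\sigma^k$, one has $\varphi(\gamma_n)=S_nf(x)+O(1)$ (after absorbing a bounded coboundary into $f$) whenever $x\in\Sigma$ codes the geodesic ray $\gamma$. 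It therefore suffices to prove $\frac1n S_nf(x)\to\Lambda$ for almost every coding point $x$.

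Next I would identify the relevant shift-invariant measure. The defining feature of the Patterson--Sullivan measure $\nu$ is that the shadow $\mathcal{O}(g)$ of a group element satisfies $\nu(\mathcal{O}(g))\asymp\lambda^{-|g|}$, and under the coding shadows correspond (up to bounded overlap, using hyperbolicity) to cylinder sets of $\Sigma$. Hence the measure $\widehat\nu$ on $\Sigma$ corresponding to $\nu$ assigns mass $\asymp\lambda^{-n}$ to each length-$n$ cylinder it charges; equivalently, $\widehat\nu$ is comparable, with bounded Radon--Nikodym derivative, to the measure of maximal entropy (Parry measure) $\mu$ carried by the union of maximal recurrent components of $\mathcal{G}$, and these components carry full $\widehat\nu$-mass. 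Here I need the standard structural facts that $\mathcal{G}$ may be chosen so that its maximal components are irreducible of topological entropy $\log\lambda$, and that $\mu$ restricted to such a component is ergodic (indeed mixing) for $\sigma$.

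With this in place the ergodic-theoretic conclusion is immediate: since $f$ is continuous on the compact space $\Sigma$, hence bounded, Birkhoff's theorem applied to $(\Sigma,\sigma,\mu)$ gives $\frac1n S_nf(x)\to\int f\,d\mu=:\Lambda$ for $\mu$-a.e.\ $x$; as the limit is $\mu$-a.e.\ constant and $\widehat\nu\ll\mu$, the same limit holds $\widehat\nu$-a.e., and so $\nu$-a.e.\ on $\partial G$ along the distinguished ray. Combining with $\varphi(\gamma_n)=S_nf(x)+O(1)$ yields $\varphi(\gamma_n)/n\to\Lambda$ for $\nu$-a.e.\ $[\widetilde\gamma]$ along that ray. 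To upgrade this to \emph{every} $\gamma\in[\widetilde\gamma]$, I would use that two geodesic rays based at the identity and converging to the same boundary point fellow-travel, so $\sup_n d_{C(G)}(\gamma_n,\gamma_n')<\infty$; together with the coarse Lipschitz bound on $\varphi$ that is part of Condition $(1)$ this forces $\sup_n|\varphi(\gamma_n)-\varphi(\gamma_n')|<\infty$, so the limit is independent of the chosen representative.

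The main obstacle I anticipate is not the ergodic theorem but the bookkeeping that legitimises the symbolic model: the coding map is only a measurable bijection off a null set, the measure $\widehat\nu$ is not itself $\sigma$-invariant, and one must verify the maximal-component decomposition of $\mathcal{G}$ so that a genuine ergodic invariant measure $\mu$ governs the Birkhoff averages while still dominating $\widehat\nu$ (and so that one can replace the coarse shadow estimate by an exact cylinder estimate up to bounded multiplicative error). The second delicate point is that the representation $\varphi(\gamma_n)=S_nf(x)+O(1)$ must hold for \emph{every} coding point $x$, not merely $\mu$-almost every one; securing this uniform error term is precisely the role of the structure theory behind Conditions $(1)$ and $(2)$, and once it is granted the remaining argument is routine.
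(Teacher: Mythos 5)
Your proposal is built around the claim that $\widehat\nu$ and the Parry-type measure $\mu$ are mutually absolutely continuous (``$\widehat\nu\ll\mu$'' with bounded Radon--Nikodym derivative). This is false: $\widehat\nu$ is supported on $Y=\{x:x_0=\ast\}$, while $\mu$ is a $\sigma$-invariant measure supported on $\bigcup_i\Sigma_{B_i}$, and these two sets are disjoint (no infinite path in a maximal component visits $\ast$, since $\ast$ has no incoming edges). The two measures are mutually \emph{singular}. Lemma 4.4 of the paper says that $\widehat\nu$ gives cylinders $[\ast,x_1,\dots,x_n]$ mass $\asymp\lambda^{-n}$, but those cylinders are disjoint from the cylinders that $\mu$ charges, so this does not yield any domination. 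The correct relationship, proved in Lemma 4.5 and Proposition 4.6, is that the Ces\`aro averages $\frac1n\sum_{j=0}^n\sigma_\ast^j\widehat\nu$ converge to $\mu$ in total variation, i.e.\ one must shift $\widehat\nu$ forward before comparing with $\mu$. Your argument ``the limit is $\mu$-a.e.\ constant and $\widehat\nu\ll\mu$, so the same limit holds $\widehat\nu$-a.e.'' therefore collapses.

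There is a second gap: you apply Birkhoff's theorem to $(\Sigma,\sigma,\mu)$ and assert that the ergodic averages converge $\mu$-a.e.\ to the single constant $\int f\,d\mu$. But $\mu=\sum_i\alpha_i\mu_i$ (Proposition 4.2) is generally \emph{not} ergodic when there is more than one maximal component; Birkhoff only gives convergence to $\int f\,d\mu_i$ on each component, and one must separately know that these means agree. (The paper cites Proposition 6.2 of \cite{me} for this fact, but only later, in the proof of Theorem 1.4; it is not part of Proposition 4.2.) The paper's proof of Theorem 1.2 sidesteps both issues by a different and lighter argument: the set $\mathcal U_\Lambda$ of boundary points with prescribed linear growth rate is shown to be well-defined and $G$-invariant (Lemma 5.1, using the two Lipschitz hypotheses), so by ergodicity of the $G$-action on $(\partial G,\nu)$ it suffices to show $\nu(\mathcal U_\Lambda)>0$ for $\Lambda=\int f\,d\mu_i$ on \emph{one} maximal component; positivity then follows from $\mu_i(E_\Lambda)>0$, Proposition 4.6 to get a $k$ with $\sigma_\ast^k\widehat\nu(E_\Lambda)>0$, and shift-invariance of the Birkhoff limit. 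Your approach could be repaired by replacing the false $\widehat\nu\ll\mu$ claim with the observation that $\widehat\nu$-a.e.\ $y\in Y$ has some shift $\sigma^k(y)$ landing in $\bigcup_i\Sigma_{B_i}$ with law proportional to $\mu$ on cylinders (Lemma 4.5 and Lemma 4.8), and by invoking the agreement of the means $\int f\,d\mu_i$ across components — but as written the central step is not valid.
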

\begin{remark}
When $\varphi$ is the displacement function associated to a convex cocompact group action on a $\text{CAT}(-1)$ metric space, we recover a special case of Proposition $1.1$. We note that the non-elementary actions to which Proposition $1.1$ applies are more general than convex cocompact.
\end{remark}
This shows that, along typical elements of $\partial G$, a function $\varphi$ satisfying the hypotheses of Theorem $1.2$ grows asymptotically like $\Lambda n$.  We can then ask if it is possible to describe more precisely how $\varphi$ grows along elements of $\partial G$. To achieve this, we need to impose an additional assumption on $\varphi$ to ensure that $\varphi(\cdot)-|\cdot|\Lambda$ grows along typical geodesic rays. Specifically, we need that the set
$$\left\{[\gamma] \in \partial G: \{\varphi(\gamma_n)-n\Lambda:n\in\mathbb{Z}_{\ge0}\} \text{ is unbounded}\right\}$$
is non-empty. The fact that this set is well-defined will follow from Condition (2).  Surprisingly, this is the only additional hypothesis we need in order to obtain the following, more precise description of how $\varphi$ grows.
\begin{theorem}
Let $G$ be a non-elementary hyperbolic group equipped with a finite generating set $S$. Fix a bounded subset $H$ of the Cayley graph of $G$. Suppose $\varphi:G\to\mathbb{R}$ satisfies Condition $(1)$ and Condition $(2)$ and that $\Lambda$ is the quantity defined in Theorem $1.2$. Then, if the set
$$\left\{[\gamma] \in \partial G: \{\varphi(\gamma_n)-n\Lambda:n\in\mathbb{Z}_{\ge0}\} \text{ is unbounded}\right\}$$
is non-empty, there exists $\sigma^2>0$ such that for $x\in\mathbb{R}$,
$$ \nu(\mathcal{A}_n(x)) =  \frac{1}{\sqrt{2 \pi } \sigma} \int_{-\infty}^x e^{-t^2/2\sigma^2} \ dt + O(n^{-1/4}) ,$$
as $n \to \infty$, where
$$ \mathcal{A}_n(x)=\left\{ [\widetilde{\gamma}] \in \partial G : \text{ for all } \gamma \in [\widetilde{\gamma}] \text{ with } \gamma_0 \in H, \frac{\varphi(\gamma_n)-n\Lambda}{\sqrt{n}} \le x \right\}.$$
The implied constant is uniform in $x \in \mathbb{R}$.
\end{theorem}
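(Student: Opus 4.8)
The plan is to move the problem onto a subshift of finite type, prove a Berry--Esseen central limit theorem there by the perturbed transfer operator method, and then transport the estimate back to the Gromov boundary; the dynamical step will be robust, and it is the final transfer step, together with the only ``coarse'' control of $\varphi$ furnished by Conditions $(1)$ and $(2)$, that I expect to be responsible for the $n^{-1/4}$ rate. First I would set up the coding: using the strongly Markov structure of $(G,S)$ recalled above, fix a finite directed graph $\mathcal G$ with a distinguished initial vertex whose admissible paths encode geodesics in $C(G)$, together with the associated one-sided subshift of finite type $(\Sigma,\sigma)$, in which a point codes a geodesic ray based at the identity, and the coding map $\pi\colon\Sigma\to\partial G$ sending such a ray to its endpoint. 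Decomposing $\mathcal G$ into its maximal strongly connected components, only the components of maximal growth rate $\lambda$ carry mass in the limit (if there are several, the argument below runs on each and the conclusions combine linearly), so after the customary reductions --- passing to such a component and to a power of the shift for topological mixing --- one works on a mixing subshift. I would then invoke the analysis of \cite{me} behind Conditions $(1)$ and $(2)$ to produce a H\"older function $F\colon\Sigma\to\mathbb R$ with $\int F\,dm=\Lambda$, where $m$ is the relevant equilibrium state and $\Lambda$ the constant of Theorem~$1.2$, such that whenever $y\in\Sigma$ codes a geodesic ray $\gamma$ one has $\varphi(\gamma_n)-n\Lambda=\sum_{k=0}^{n-1}F(\sigma^k y)-n\Lambda+E_n$ with $E_n$ controlled by Conditions $(1)$ and $(2)$ --- though not merely by $O(1)$. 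The hypothesis that the exceptional set in the statement is non-empty will force $F-\Lambda$ not to be a continuous coboundary, so by the classical variance formula for subshifts of finite type $\sigma^2=\lim_n\tfrac1n\int(\sum_{k=0}^{n-1}F\circ\sigma^k-n\Lambda)^2\,dm>0$; this also excludes the lattice case.

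The second ingredient is the identification $\nu=\pi_*m$: the weak-$*$ limit defining $\nu$, transported through the coding, converges to the measure of maximal entropy $m$ of the subshift, and $\pi$ is injective $m$-almost everywhere, so $\nu(\mathcal A_n(x))=m(\pi^{-1}\mathcal A_n(x))$. Since any two geodesic rays in a fixed boundary class that start in the bounded set $H$ fellow-travel, with a constant depending only on the hyperbolicity constant of $C(G)$ and on $\operatorname{diam}H$, and $\varphi$ is coarsely Lipschitz (as follows from Condition $(1)$), for $m$-a.e.\ $y$ with $\pi(y)=[\widetilde\gamma]$ the condition defining membership of $[\widetilde\gamma]$ in $\mathcal A_n(x)$ becomes, up to an additive $E_n+O(1)$ that is uniform in $x$, $n$ and $y$, the single inequality $\sum_{k=0}^{n-1}F(\sigma^k y)-n\Lambda\le x\sqrt n$.

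For the dynamical central limit theorem I would run the Nagaev--Guivarc'h perturbation argument on $\Sigma$. Writing $\mathcal L$ for the normalised Ruelle operator associated with $m$ and $\mathcal L_t$ for its twist by $e^{\mathrm{i}tF}$, for $|t|$ small $\mathcal L_t$ has on a H\"older space a simple leading eigenvalue $\rho(t)=1+\mathrm{i}\Lambda t-\tfrac12(\sigma^2+\Lambda^2)t^2+o(t^2)$ with a spectral gap uniform in $t$, while for $|t|$ bounded away from $0$ the spectral radius of $\mathcal L_t$ is strictly less than $1$ by aperiodicity (the non-lattice property secured above). Feeding the resulting control of the characteristic function of $\sum_{k<n}F\circ\sigma^k$ into Esseen's smoothing inequality gives
$$ m\!\left(\frac{\sum_{k=0}^{n-1}F\circ\sigma^k-n\Lambda}{\sqrt n}\le z\right)=\frac{1}{\sqrt{2\pi}\,\sigma}\int_{-\infty}^{z}e^{-t^2/2\sigma^2}\,dt+O(n^{-1/2}) $$
uniformly in $z\in\mathbb R$.

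Finally I would glue everything together with a truncation argument. For a parameter $\delta>0$, sandwich $\pi^{-1}\mathcal A_n(x)$, up to the set $\{|E_n|>\delta\sqrt n\}$, between the two events $\{\sum_{k<n}F\circ\sigma^k-n\Lambda\le(x\mp\delta)\sqrt n\}$; since the Gaussian distribution function is Lipschitz, each of the two resulting bounds differs from the claimed expression by $O(\delta)+O(n^{-1/2})$, while the $m$-measure of the bad set is estimated by a Chebyshev-type inequality using only the (non-sharp) bound on $E_n$ coming from Conditions $(1)$ and $(2)$. Choosing $\delta\asymp n^{-1/4}$ balances these two contributions and produces the asserted estimate for $\nu(\mathcal A_n(x))$, uniformly in $x$. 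The hard part, and the reason the exponent is $-1/4$ rather than the $-1/2$ of the internal dynamical estimate, is precisely this last step: quantifying how closely $\varphi$ evaluated along a boundary ray is shadowed by the dynamical Birkhoff sum, while keeping the identification $\nu=\pi_*m$ intact through the non-transitivity and the bounded multiplicity of the coding.
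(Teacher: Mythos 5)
Your overall strategy — code to a subshift, prove a Berry--Esseen CLT there, transfer back to $\partial G$ — is correct, and the dynamical CLT step is essentially what the paper does (via Coelho--Parry rather than writing out Nagaev--Guivarc'h). But the transfer step contains a fundamental error, and you have also misdiagnosed where the exponent $-1/4$ comes from.

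\textbf{The identification $\nu=\pi_*m$ is false.} In the strongly Markov coding, the Patterson--Sullivan measure $\nu$ pushes back to a measure $\widehat\nu$ supported on the set $Y$ of infinite paths that \emph{start at the initial vertex} $\ast$. The measure of maximal entropy $\mu$ (your $m$), by contrast, is supported on the union of the maximal recurrent components $\bigcup_i\Sigma_{B_i}$. Since no edge of $\mathcal G$ enters $\ast$, these supports are disjoint: $\widehat\nu$ and $\mu$ are \emph{mutually singular}, and $\nu(\mathcal A_n(x))$ cannot be computed as an $m$-measure. The correct relationship, and the crux of the paper's argument, is a Cesàro one: $\frac{1}{n}\sum_{j=0}^{n}\sigma_*^j\widehat\nu\to\mu$ in total variation, and what one has to prove (Proposition 4.6 in the paper) is that the rate is $O(n^{-1})$. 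Your step ``so $\nu(\mathcal A_n(x))=m(\pi^{-1}\mathcal A_n(x))$'' therefore does not follow, and the whole truncation argument built on it does not get off the ground.

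\textbf{The error $E_n$ is in fact $O(1)$, and the $n^{-1/4}$ has a different source.} Condition $(1)$ gives exactly $\varphi(g)=f^{|g|}(i(g))$, and the Lipschitz property of $f$ in $d_\theta$ then gives $f^n(y)=\varphi(h(y)_n)+O(1)$ uniformly in $y\in Y$ and $n$ (a geometric series of tail corrections). So a Chebyshev/truncation argument with $\delta\asymp n^{-1/4}$ is not what is going on, and there is no ``non-sharp'' coarse bound on $E_n$ to exploit. In the paper, the exponent $-1/4$ arises from an entirely different balance: one compares $\widehat\nu$ to the average $\frac{1}{k_n}\sum_{j\le k_n}\sigma_*^j\widehat\nu$, which approximates $\mu$ at rate $O(k_n^{-1})$, but applying $\sigma^j$ for $j\le k_n$ shifts the Birkhoff sum by at most $2k_n\|f\|_\infty$, so the CLT threshold must be widened by $O(k_n n^{-1/2})$. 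Balancing $k_n^{-1}$ against $k_n n^{-1/2}$ forces $k_n\asymp n^{1/4}$ and gives the stated error $O(n^{-1/4})$; when $A$ is aperiodic, so that $\sigma(Y)$ is already inside the unique maximal component, the averaging is unnecessary and one recovers $O(n^{-1/2})$, as noted in the paper's final corollary. You would need to replace the false identity $\nu=\pi_*m$ by this Cesàro-averaging mechanism, and quantify the rate $O(n^{-1})$ of that averaging (which requires the Jordan-block estimate in Lemma 4.1 together with the Calegari--Fujiwara disjointness of maximal components), to complete the proof.
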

\begin{remark}
The reason that we ask for $\gamma_0 \in H$ is due to the following fact. For $\nu$ almost every $[\widetilde{\gamma}] \in \partial G$ and every $n \ge 1$, we can find $\gamma \in [\widetilde{\gamma}]$ for which $\varphi(\gamma_n) - n\Lambda$ is arbitrarily large. Therefore without this assumption, $\mathcal{A}_n$ would have zero $\nu$ measure for all $n\in\mathbb{Z}_{\ge 0}$.
\end{remark}
 The following result from \cite{me} then shows that real-valued group homomorphisms satisfy the hypotheses of Theorem $1.4$.
\begin{proposition} [\cite{me} Lemma 7.11, Corollary 7.12]
Let $G$ be a non-elementary hyperbolic group equipped with a finite generating set $S$. Suppose $\varphi:G\to\mathbb{R}$ is a non-trivial group homomorphism. Then the constant $\Lambda$ obtained from Theorem \ref{thm1}  is zero  and the set
$$\{[\gamma] \in \partial G: \{\varphi(\gamma_n):n\in\mathbb{Z}_{\ge0}\} \text{ is unbounded}\},$$
is non-empty and in fact has full $\nu$ measure.
\end{proposition}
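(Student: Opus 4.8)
The plan is to treat the two assertions in turn, using Theorem $1.2$ and Theorem $1.4$. I would deduce $\Lambda=0$ from a reflection symmetry of the geodesic flow. Equip the space of (coarse) bi-infinite geodesics in the Cayley graph of $G$ with the Bowen--Margulis--Sullivan measure $m$: it is invariant and ergodic under the shift $\Gamma\mapsto\Gamma(\cdot+1)$, invariant under the flip $\Gamma\mapsto\bar\Gamma$ with $\bar\Gamma(k)=\Gamma(-k)$, and the laws of $\Gamma(+\infty)$ and of $\Gamma(-\infty)$ under $m$ are both equivalent to $\nu$. Since $\varphi$ is a homomorphism, $g(\Gamma):=\varphi(\Gamma(1))-\varphi(\Gamma(0))=\varphi(\Gamma(0)^{-1}\Gamma(1))$ takes only finitely many values and $\varphi(\Gamma(k))-\varphi(\Gamma(0))$ is the Birkhoff sum of $g$ over $k$ steps of the shift. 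By Birkhoff's theorem, $\tfrac1k\bigl(\varphi(\Gamma(k))-\varphi(\Gamma(0))\bigr)\to\int g\,dm$ as $k\to\infty$; on the other hand, by Theorem $1.2$ applied at $\Gamma(+\infty)$ together with the bound $|\varphi(x)-\varphi(y)|\le d(x,y)\max_{s\in S}|\varphi(s)|$, the same limit equals $\Lambda$, so $\int g\,dm=\Lambda$. Running the argument in negative time gives $\tfrac1k\bigl(\varphi(\Gamma(-k))-\varphi(\Gamma(0))\bigr)\to-\int g\,dm=-\Lambda$, whereas Theorem $1.2$ applied at $\Gamma(-\infty)$ identifies this limit with $\Lambda$; hence $\Lambda=-\Lambda$ and $\Lambda=0$.

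For the second assertion I would first check non-emptiness of the displayed set, which with $\Lambda=0$ just says that some geodesic ray has $\{\varphi(\gamma_n):n\ge0\}$ unbounded. As $\varphi$ is non-trivial, fix $g_0$ with $\varphi(g_0)\neq0$; then $g_0$ has infinite order (otherwise $\varphi(g_0^m)=0=m\varphi(g_0)$), hence is loxodromic, so $m\mapsto g_0^m$ is a quasi-geodesic. By the Morse lemma, a geodesic ray $\gamma$ from the identity to the attracting fixed point $g_0^{+\infty}\in\partial G$ stays within a bounded distance of $\{g_0^m:m\ge0\}$, so $\varphi(\gamma_n)=m_n\varphi(g_0)+O(1)$ with $m_n\to\infty$, and $\{\varphi(\gamma_n):n\ge0\}$ is unbounded. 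Theorem $1.4$ therefore applies and yields $\sigma^2>0$ with $\nu(\mathcal A_n(x))=\tfrac{1}{\sqrt{2\pi}\,\sigma}\int_{-\infty}^x e^{-t^2/2\sigma^2}\,dt+O(n^{-1/4})$, uniformly in $x$. Since the values $\varphi(\gamma_n)$ over geodesic rays $\gamma\in[\widetilde\gamma]$ based in a fixed bounded set differ by $O(1)$, it follows that $\nu(\partial G\setminus\mathcal A_n(A/\sqrt n))\to\tfrac12$ for every fixed $A\in\mathbb R$; writing $F_A=\limsup_n(\partial G\setminus\mathcal A_n(A/\sqrt n))$, we have $\nu(F_A)\ge\limsup_n\nu(\partial G\setminus\mathcal A_n(A/\sqrt n))=\tfrac12$, the sets $F_A$ decrease as $A\uparrow\infty$, and $\bigcap_A F_A\subseteq\mathcal U:=\{[\widetilde\gamma]:\{\varphi(\gamma_n):n\ge0\}\text{ is unbounded}\}$, so $\nu(\mathcal U)\ge\tfrac12$.

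Finally, $\mathcal U$ is $G$-invariant: a geodesic ray from the identity to $g[\widetilde\gamma]$ eventually stays within a bounded distance of $g\gamma$ for any geodesic ray $\gamma$ from the identity to $[\widetilde\gamma]$, and the corresponding values of $\varphi$ differ by $\varphi(g)+O(1)$, so one is unbounded exactly when the other is. As the $G$-action on $(\partial G,\nu)$ is ergodic, $\nu(\mathcal U)\in\{0,1\}$; since $\nu(\mathcal U)\ge\tfrac12$, we conclude $\nu(\mathcal U)=1$. The main obstacle I foresee is the first step: constructing the Bowen--Margulis--Sullivan measure on bi-infinite geodesics with the stated invariance and marginal properties, and matching $\Lambda$ with the flow average $\int g\,dm$. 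This is routine for $\mathrm{CAT}(-1)$ targets; for a general hyperbolic group it is most cleanly carried out via the strongly Markov coding and its natural extension, invoking uniqueness of the Patterson--Sullivan measure class. The remaining geometric steps reduce to the Morse lemma and the stability of geodesic rays sharing a common endpoint.
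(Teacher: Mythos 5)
The paper itself does not reprove Proposition 1.5; it cites Lemma 7.11 and Corollary 7.12 of \cite{me}, which work entirely inside the subshift/thermodynamic framework of Sections 2--4. Your proposal takes a genuinely different route, so let me comment on each piece.

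Your arguments for non-emptiness and for upgrading positive measure to full measure are correct and, to my eye, complete given what the paper provides. Choosing $g_0$ with $\varphi(g_0)\neq 0$, noting $g_0$ is loxodromic, and using the Morse lemma plus Condition (2) to get $\varphi(\gamma_n)=m_n\varphi(g_0)+O(1)$ along a ray to $g_0^{+\infty}$ cleanly produces a boundary point with unbounded $\varphi$-values. The passage from the Berry--Esseen estimate of Theorem 1.4 to positive $\nu$-measure of $\mathcal U$ (reverse Fatou on the decreasing family $F_A$, using that different base points in $H$ change $\varphi(\gamma_n)$ only by $O(1)$ via Lemma 5.4) is correct, and the $G$-invariance plus ergodicity of $(\partial G,\nu)$ then forces $\nu(\mathcal U)=1$. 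This is a nice self-contained way to get full measure from the CLT that the paper makes available.

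The real gap is in your first step. You invoke a Bowen--Margulis--Sullivan measure $m$ on the space of coarse bi-infinite geodesics in the Cayley graph which is shift-ergodic (in fact you need shift-invariance and ergodicity on the $G$-quotient so that Birkhoff applies to a finite measure, and you need the $G$-invariant cocycle $g(\Gamma)=\varphi(\Gamma(0)^{-1}\Gamma(1))$ to descend), and whose marginals at $+\infty$ and $-\infty$ are both in the class of $\nu$. For a general hyperbolic group this is not available from anything in this paper: one must construct a geodesic flow space (the space of unparametrized bi-infinite geodesics is not a flow space and does not carry a canonical shift), put a quasi-invariant measure in the Patterson--Sullivan class on $\partial^2 G$, pass to the $G$-quotient, prove finiteness and ergodicity of the resulting flow, and verify the marginal statements. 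All of this is known but it is a significant import of machinery (e.g.\ boundary ergodicity results of the Bader--Furman type), not the routine reduction you suggest. You flag this yourself as the main obstacle, and I agree it is one. By contrast, the route in \cite{me} stays inside the symbolic picture; there $\Lambda$ is identified with the mean of $f$ against the Parry-type measure $\mu$ of Proposition 4.2, which in turn is controlled by spherical averages, and the homomorphism symmetry $\varphi(g^{-1})=-\varphi(g)$ together with $|g^{-1}|=|g|$ gives $\sum_{|g|=n}\varphi(g)=0$, forcing $\Lambda=0$ without ever leaving the one-sided subshift. That argument is much lighter and fits the paper's framework; if you want a self-contained proof in this paper's language, that is the direction to pursue rather than the BMS construction.
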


To conclude the introduction, we briefly outline the contents of this paper. In the second section we cover preliminary material concerning hyperbolic groups, their strongly Markov structure and the Patterson-Sullivan measure. In the third section we discuss the regularity conditions, Condition $(1)$ and Condition $(2)$. We then, in Section $4$, study the properties of the Patterson-Sullivan measure. We prove Theorems $1.2$ and $1.4$ in the remaining section.\\

\textit{Notation:} Throughout the paper, we use the following notation to describe the asymptotic behaviour of sequences. Suppose $f_n, g_n,h_n$ are real valued sequences. We write $f_n = O(g_n)$ if there exists $C>0$ such that eventually $|f_n| \le C|g_n|$. If $|f_n/g_n| \to 0$ as $n\to\infty$ we write $f_n = o(g_n)$. We write $f_n = O(g_n,h_n)$ if $f_n=O(\max\{|g_n|,|h_n|\})$.

\section{Hyperbolic Groups and Symbolic Codings}
In this section we cover preliminary material related to hyperbolic groups and symbolic dynamics.
\begin{definition}
Let $G$ be a finitely generated group with finite generating set $S$. We define the left and right word metrics on $G$ by
$$d_L(g,h) = |g^{-1}h| \hspace{1mm} \text{ and } \hspace{1mm} d_R(g,h)=|gh^{-1}|$$
for $g,h\in G$. Here $|\cdot|$ denotes the word metric, i.e. $|g|$ is the length of the shortest word(s) representing $g$ with letters in $S\cup S^{-1}$. We say that $G$ is hyperbolic if there exists $\delta\ge0$ such that any geodesic triangle in the $d_L$ metric is $\delta$-thin (i.e. any point on the side of a geodesic triangle is within distance $\delta$ of one of the other two sides). 
\end{definition} 
We say that a hyperbolic group is non-elementary if it is not virtually cyclic, i.e. it does not contain a finite index cyclic subgroup. Suppose that $G$ is a non-elementary hyperbolic group equipped with a finite  generating set and let $W(n) = \# \{g \in G: |g|=n\}$ denote the word length counting function. Coornaert proved that the growth rate of $W(n)$ is purely exponential \cite{coor}, i.e. there exists $\lambda>1$ and $C_0,C_1 >0$ such that
$$C_0 \lambda^n \le W(n) \le C_1 \lambda^n.$$
This fact will be key to our analysis. \\ \indent
Let $C(G)$ denote the Cayley graph of $G$ with respect to $S$. The Gromov boundary $\partial G$ of $G$ consists of equivalence classes of infinite geodesic rays in $C(G)$. Two geodesic rays $\gamma$ and $\gamma'$ are said to be equivalent if $d_L(\gamma_n,\gamma'_n)$ is bounded uniformly for $n\in\mathbb{Z}_{\ge0}$. Here, $\gamma_n, \gamma'_n$ denote the end points of $\gamma$, $\gamma'$ after $n$ steps. Given an infinite geodesic ray $\gamma$ we use $[\gamma]$ to denote the element of $\partial G$ containing $\gamma$. There is a natural compact topology for $G \cup \partial G$ that extends the topology on $G$ given by the word metric. The action of $G$ extends continuously to $G \cup \partial G$ by sending $[\gamma] \in \partial G$ to $[g\gamma] \in \partial G$. \\
\indent The Patterson-Sullivan measure $\nu$ is a measure on $\partial G$ obtained as the weak star limit, as $n \to \infty$, of the following sequence of measures
$$\frac{\sum_{|g|\le n} \lambda^{-|g|}\delta_g}{\sum_{|g|\le n} \lambda^{-|g|}}$$
on $G \cup \partial G$. Here $\delta_g$ denotes the Dirac measure based at $g \in G$. The measure $\nu$  is ergodic with respect to the action of $G$ on $\partial G$. See \cite{coor} and \cite{patcat} for a comprehensive account of the above material concerning the Patterson-Sullivan measure. We will now discuss the combinatorial properties of hyperbolic groups.\\ \indent
As mentioned in the introduction, hyperbolic groups have nice combinatorial properties that arise due to their strongly Markov structure.
\begin{definition}
A finitely generated group $G$ is strongly Markov if given any finite generating set $S$ there exists a finite directed graph $\mathcal{G}$ with vertex set $V$, edge set $E$ and a labeling map $\rho : E \to S$ such that:
\begin{enumerate}
\item there exists an initial vertex $\ast \in V$ such that no directed edge ends at $\ast$;
\item the map taking finite paths in $\mathcal{G}$ starting at $\ast$ to $G$ that sends a path with concurrent edges $(\ast,x_1),...,(x_{n-1},x_n)$ to $\rho(\ast,x_1)\rho(x_1,x_2)...\rho(x_{n-1},x_n)$, is a bijection; 
\item the word length of $\rho(\ast,x_1)...\rho(x_{n-1},x_n)$ is $n$.
\end{enumerate}
\end{definition}
Cannon introduced this property and proved that cocompact Kleinian groups are strongly Markov \cite{can}. Ghys and de la Harpe showed that Cannon's method worked for arbitrary hyperbolic groups.

\begin{proposition} [\cite{gh} Theorem 13]
Any hyperbolic group is strongly Markov.
\end{proposition}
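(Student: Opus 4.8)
The plan is to build the finite directed graph $\mathcal{G}$ out of Cannon's theory of cone types. For $g \in G$, define the \emph{cone type} of $g$ to be
$$C(g) = \{ h \in G : |gh| = |g| + |h| \},$$
the set of $h$ for which some geodesic from the identity to $gh$ passes through $g$. The strategy has two parts: first show that $G$ has only finitely many distinct cone types, and then assemble $\mathcal{G}$ using essentially one vertex per cone type, with an edge $v_{C(g)} \to v_{C(gs)}$, labelled $s$, whenever $|gs| = |g| + 1$.

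The crux of the argument — and the step I expect to be the main obstacle — is the finiteness of $\{ C(g) : g \in G \}$. For this one argues that $C(g)$ is determined by a bounded amount of data near $g$. Fix a constant $k = k(\delta, \#S)$, to be taken large in terms of the hyperbolicity constant $\delta$, and define the \emph{$k$-profile}
$$P_k(g) = \{ (h, |gh| - |g|) : h \in G,\ |h| \le k \}.$$
Using $\delta$-thinness of geodesic triangles — concretely, that a geodesic word cannot be lengthened or shortened by a generator without this being witnessed within distance $O(\delta)$ of its endpoint, together with a fellow-travelling argument for competing geodesics — one proves the key lemma: $P_k(g) = P_k(g')$ forces $C(g) = C(g')$. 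The proof is an induction on $|h|$ for $h \in C(g)$: to decide whether $h = h's$ with $|s| = 1$ lies in $C(g')$, one uses $h' \in C(g')$ (by induction) and reduces, via $\delta$-hyperbolicity, any failure to a shortcut that already appears among elements of bounded length, i.e.\ to information carried by $P_k$. Since $P_k(g)$ ranges over subsets of the fixed finite set $\{ (h,m) : |h| \le k,\ |m| \le k \}$, there are only finitely many cone types. This is Cannon's argument, and as Ghys and de la Harpe observed it uses nothing about Kleinian groups beyond $\delta$-hyperbolicity.

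Granting this, take $V$ to consist of a distinguished initial vertex $\ast$ together with one vertex $v_T$ for each cone type $T$ realised by an element of positive word length; put a directed edge labelled $s \in S \cup S^{-1}$ from $v_{C(g)}$ to $v_{C(gs)}$ exactly when $|gs| = |g| + 1$, and similarly from $\ast$ to $v_{C(s)}$ when $|s| = 1$. This is well defined because both the set of length-increasing generators at $g$ and the resulting cone type depend only on $C(g)$. Condition $(3)$ of Definition $2.2$ is then immediate, since every edge increases word length by exactly one, so a length-$n$ path from $\ast$ spells a geodesic word representing an element of word length $n$; and condition $(1)$ holds because the identity is the only element of word length $0$ while every edge strictly increases word length, so no edge can end at $\ast$. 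The one remaining point is the bijectivity in condition $(2)$: the cone-type automaton above is only surjective onto $G$, since an element with several geodesics yields several accepted paths. To fix this I would refine the states to also track just enough information to follow the ShortLex-least geodesic to $g$ — ShortLex-minimality of a geodesic being detectable with bounded memory, again by the $\delta$-fellow-travelling of competing geodesics — so that each $g \in G$ has a unique accepted path and the path-to-group map becomes a bijection. The bulk of the work is the finiteness of cone types; the rest is bookkeeping.
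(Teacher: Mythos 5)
The paper offers no proof here, only the citation to Ghys--de la Harpe Theorem 13; your sketch is a correct reconstruction of that source's argument, which is Cannon's cone-type method extended to general hyperbolic groups. You identify the two genuine steps --- finiteness of cone types via the bounded $k$-profile lemma, and the lexicographic/ShortLex refinement (again via $\delta$-fellow-travelling of competing geodesics) needed to upgrade the cone-type automaton's surjection onto $G$ to the bijection demanded by Definition 2.2(2) --- and both match the cited proof.
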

Throughout the rest of this paper we will assume that $G$ is a non-elementary hyperbolic group equipped with a finite  generating set $S$. Let $\mathcal{G}$ be a graph associated to a $G$ via the strongly Markov property. We augment $\mathcal{G}$ by adding an extra vertex $0\in V$ and edges $(v,0)$ for all $v\in V\cup\{0\}\backslash \{\ast\}$. We define $\rho(v,0) = e$ for $v\in V\cup\{0\}\backslash \{\ast\}$ , where $e \in G$ is the identity element. We will assume that any graph $\mathcal{G}$ associated to $G$ has been augmented in this way.\\ \indent
As mentioned in the introduction, we can use this strongly Markov structure to construct a dynamical system that encodes the properties of $G$. Suppose that $\mathcal{G} = (E,V)$ is a directed graph associated to $G$ via the strongly Markov property. We define a transition matrix $A$, indexed by $V \times V$,  by
\[
  A(v_1,v_2) = \left\{
     \begin{array}{@{}l@{\thinspace}l}
        1 & \ \  \text {if} \hspace{2mm}  (v_1,v_2) \in E \\
        0 & \ \  \text{otherwise.}
     \end{array}
   \right.
\]
Using $A$ we define
$$\Sigma_A = \{ (x_n)_{n=0}^\infty : x_n \in V \text{ and } A(x_n,x_{n+1}) = 1 \text{ for all } n\in\mathbb{Z}_{\ge 0}\}$$
and $\sigma:\Sigma_A \to \Sigma_A$ by $\sigma((x_n)_{n=0}^\infty) = (x_{n+1})_{n=0}^\infty$. The system $(\Sigma_A,\sigma)$ is known as a subshift of finite type. We embed $G$ into $\Sigma_A$ via the function $i:G\to\Sigma_A$ that sends a group element $g \in G$ to the unique element $(\ast,x_1,x_2,...,x_n,0,0,...)$ for which $\rho(\ast,x_1)...\rho(x_{n-1},x_n) = g$ and $|g|=n$. This correspondence will allow us to prove facts about $G$ by studying the properties of $\Sigma_A$. For the rest of this section we recount the properties of subshifts that we require for our proofs.\\ \indent
Let $B$ be a zero-one matrix. We say that $B$ is irreducible if given $i,j$, there exists $N$ such that $B^N(i,j) >0$. If there exists $N$ such that $B^N(i,j)>0$ for all pairs $i,j$ then we say that $B$ is aperiodic. For each $0<\theta <1$ there is a metric $d_\theta$ on $\Sigma_B$ defined by $d_\theta(x,y) = \theta^{s(x,y)}$ where $s(x,y) \in \mathbb{Z}_{\ge0}$ is the first integer $n$ such that $x_n \neq y_n$. We write $F_\theta(\Sigma_B) = \{ f: \Sigma_B \to \mathbb{R} : \text{ $f$ is Lipschitz in the $d_\theta$ metric}\}.$ Given $f \in F_\theta(\Sigma_B)$, we write $f^n(x)=f(x)+f(\sigma(x))+...+f(\sigma^{n-1}(x))$ for $x \in \Sigma_B$. Throughout the following, we assume that $B$ is irreducible. When this is the case, the system $(\Sigma_B,\sigma)$ is transitive and admits a unique measure of maximal entropy $\mu$ \cite{parry}, i.e. there exists unique $\mu$ such that
$$\sup_{\nu} h_{\nu}(\sigma) = h_{\mu}(\sigma),$$
where the above supremum is taken over all $\sigma$-invariant probability measures. The measure $\mu$ is ergodic with respect to $\sigma$. If $f \in F_\theta(\Sigma_B)$  for some $0<\theta<1$ and $\int f \ d\mu=0$, then there exists $\sigma_f^2 \ge 0$ such that for $x \in \mathbb{R}$
$$\mu\left\{z \in \Sigma_B : \frac{f^n(z)}{\sqrt{n}}\le x \right\} = \frac{1}{\sqrt{2\pi} \sigma_f} \int_{-\infty}^x e^{-t^2/2\sigma_f^2} \ dt +O(n^{-1/2})$$
as $n\to\infty$ \cite{cp}. Furthermore, $\sigma_f^2 =0$ if and only if there exist continuous $h:\Sigma_B \to \mathbb{C}$ such that $f = h \circ \sigma - h$. In \cite{cp} this result is proved under the assumption that $B$ is aperiodic, however it is easy to see that this result passes to the irreducible case.\\ \indent
We note that since $\mathcal{G}$ has no edges that enter $\ast$, the matrix $A$ associated to $\mathcal{G}$ will never be irreducible. It is possible however that if we remove, from $A$, the rows/columns corresponding to the $0$ and $\ast$ vertices, then the resulting matrix is irreducible (or aperiodic). We say that $A$ is irreducible (or aperiodic) if this is the case. Although in general it is possible that $A$ is not irreducible, we can, by relabeling the vertex set $V$, assume $A$ has the form
$$A = \begin{pmatrix} 
A_{1,1} & 0 & \dots & 0  \\
A_{2,1} & A_{2,2} & \dots & 0\\
\vdots & \vdots & \ddots & \vdots\\
A_{m,1} & A_{m,2} & \dots & A_{m,m}
\end{pmatrix},$$
where $A_{i,i}$ are irreducible for $i=1,...,m$. We call the $A_{i,i}$ the irreducible components of $A$. Let $\lambda>1$ denote the exponential growth rate of $W(n)$. It is easy to see by Property $(2)$ and $(3)$ in Definition $2.2$ that all of the $A_{i,i}$ must have spectral radius at most $\lambda$. Furthermore there must be at least one $A_{i,i}$ with spectral radius exactly $\lambda$. We call an irreducible component maximal if it has spectral radius $\lambda$. We label the maximal components $B_i$ for $i=1,...,m$. The following key result follows from Coornaert's estimates on $W(n)$.
\begin{proposition} [\cite{cf} Lemma $4.10$]
The maximal components of $A$ are disjoint. There does not exist a path in $\mathcal{G}$ that begins in one maximal component and ends in another.
\end{proposition}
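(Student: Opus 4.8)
The disjointness of the maximal components is immediate from the construction: putting $A$ in block lower-triangular form with irreducible diagonal blocks is precisely the operation of passing to the strongly connected components of $\mathcal{G}$, so the blocks $A_{1,1},\dots,A_{m,m}$ have pairwise disjoint vertex sets, and in particular so do the maximal ones. The substance of the proposition is the second assertion, and the plan is to derive it by contradiction from Coornaert's estimate $C_0\lambda^n\le W(n)\le C_1\lambda^n$: a directed path joining two distinct maximal components would force $W(n)$ to grow at least like a constant times $n\lambda^n$ along a subsequence, which the upper bound $W(n)\le C_1\lambda^n$ forbids.

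To set this up I would first record that every vertex of $\mathcal{G}$ other than the cemetery vertex $0$ is reachable from $\ast$, so that, by properties $(2)$ and $(3)$ of the strongly Markov property, $W(n)$ is exactly the number of directed paths of length $n$ in $\mathcal{G}$ that start at $\ast$. Suppose now, for contradiction, that some vertex $u$ of a maximal component $B_i$ is joined by a directed path to some vertex $v$ of a maximal component $B_j$ with $i\neq j$. Fix such a path $\tau$ of minimal length $Q$ (so that $\tau$ visits $u$ only at its start), together with a path $\pi_0$ from $\ast$ to $u$ of length $P$. Since $B_i$ and $B_j$ are irreducible with spectral radius $\lambda$, Perron--Frobenius theory provides a constant $c>0$ such that, once lengths are restricted to suitable residue classes modulo the periods of $B_i$ and $B_j$, the number of closed paths of length $a$ based at $u$ and contained in $B_i$ is at least $c\lambda^{a}$ for all large admissible $a$, and the number of directed paths of length $b$ contained in $B_j$ and issuing from $v$ is at least $c\lambda^{b}$ for all large admissible $b$.

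Concatenating $\pi_0$, then a closed path of length $a$ based at $u$ inside $B_i$, then the bridge $\tau$, then a directed path of length $b$ inside $B_j$ from $v$, produces a directed path of length $P+Q+a+b$ starting at $\ast$. Holding $N=a+b$ fixed and letting $a$ range over its admissible values in $[0,N]$, there are $\asymp N$ choices of $a$, and for each of them at least $c^{2}\lambda^{N}$ choices of the two middle pieces; since $\tau$ is shortest and $B_i$, $B_j$ are disjoint, one checks that the resulting families of paths are pairwise disjoint. Hence there are at least a constant times $N\lambda^{N}$ directed paths of length $P+Q+N$ starting at $\ast$, i.e. $W(n)\ge c' n\lambda^{n}$ for infinitely many $n$, contradicting $W(n)\le C_1\lambda^n$. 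Therefore no directed path joins two distinct maximal components.

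The most delicate point is the bookkeeping just sketched: aligning the residue classes of $a$, $b$ and $N$ so that both Perron--Frobenius lower bounds apply simultaneously while the number of admissible splittings of $N$ remains of order $N$, and checking that the families of concatenated paths really are (essentially) disjoint. A reader content to cite the classical asymptotics for powers of a reducible non-negative matrix --- whose entries grow like $n^{d-1}\lambda^{n}$, with $d$ the largest number of maximal components lying on a common directed path --- can bypass this step. Conceptually, though, the whole argument is just the elementary observation that a bridge from one maximal component into another allows the roughly $n$ available steps to be divided between the two components in roughly $n$ essentially independent ways, producing growth of order $n\lambda^{n}$, which Coornaert's purely exponential estimate rules out.
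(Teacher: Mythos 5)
The paper does not supply its own proof of this proposition; it cites \cite{cf} (Lemma 4.10) and only remarks that the result ``follows from Coornaert's estimates on $W(n)$.'' Your argument does exactly that: it derives a contradiction with the upper bound $W(n)\le C_1\lambda^n$ by showing that a directed bridge between two maximal components would produce on the order of $n\lambda^n$ paths of length $n$ from $\ast$. This is the standard proof (and, as far as I can tell, the one in Calegari--Fujiwara), so the proposal is correct and matches the approach the paper indicates. Two small remarks: the counting should be carried out in the unaugmented graph (i.e.\ ignoring the cemetery vertex $0$), since it is paths avoiding $0$ that are in bijection with group elements of the given length; and the ``essential disjointness'' of the concatenated families for different splittings $a+b=N$ is cleanest to justify by observing that once a path leaves a strongly connected component it can never return, so the exit time from $B_i$ recovers $a$ --- this is a property of the SCC decomposition of any directed graph and does not presuppose the proposition being proved.
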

\section{Regularity Conditions}
In this section we discuss Condition $(1)$ and Condition $(2)$. This will be a brief survey of the functions satisfying these conditions, see Section $4$ of \cite{me} for a more comprehensive account. Condition $(1)$ and Condition $(2)$ are defined as follows. \medskip \\
 \noindent
\textit{Condition $(1)$} There exists a graph $\mathcal{G}$ associated to $G,S$ via the strongly Markov property with transition matrix $A$ and a function $f\in F_\theta(\Sigma_A)$ (for some $0<\theta<1$) such that $\varphi(g) = f^{|g|}(x)$ for $g \in G \text{ and } x = i(g) \in \Sigma_A.$\medskip \\
 \noindent
\textit{Condition $(2)$} $\varphi$ is Lipschitz in the left and right word metrics on $G$. \medskip

Although Condition $(1)$ relies on the properties of $\Sigma_A$, there is a natural assumption we can place on $\varphi:G \to \mathbb{R}$ to guarantee the existence of appropriate $\Sigma_A$ and $f: \Sigma_A \to \mathbb{R}$. Given $g,h \in G$, let $(g,h)$ denote their Gromov product
$$(g,h)=\frac{1}{2}\left(|g|+|h| - |gh^{-1}|\right).$$ 
\begin{definition}
We say that $\varphi:G\to\mathbb{R}$ is H\" older if for any fixed finite generating set $S$ and $a \in G$, there exists $C  >0$ and $0<\theta <1$ such that
$$|\Delta_a\varphi(g)-\Delta_a\varphi(h)| \le C \theta^{ (g,h)},$$
for any $g,h \in G$. Here, $\Delta_a\varphi (g)= \varphi(ag)-\varphi(g)$ for $a,g \in G$.
\end{definition}
Pollicott and Sharp prove that H\" older functions satisfy Condition $(1)$ in \cite{oc}. In \cite{cf} and \cite{me}, combable and edge combable functions are defined. We refer the reader to these papers for the definitions. Both these classes of functions satisfy Condition $(1)$, see Lemma $4.5$ in \cite{me}. It is clear that homomorphism to $\mathbb{R}$ are edge combable and so satisfy Condition $(1)$. The homomorphism property implies that real valued homomorphism also satisfy Condition $(2)$. In fact, the more general class of quasimorphism satisfy Condition $(2)$.
\begin{definition}
A function $\varphi : G \to \mathbb{R}$ is a quasimorphism if there exists a constant $A >0$ such that
$$|\varphi(gh) - \varphi(g) -\varphi(h)|\le A$$ for all $g,h \in G$.
\end{definition}
It is easy to check that quasimorphisms satisfy Condition $(2)$. In \cite{cf}, Calegari and Fujiwara show that Brooks counting quasimorphisms (see \cite{brooks} for a definition) satisfy Condition $(1)$ and so by the above discussion, our theorems apply to these functions. The following example, due to Barge and Ghys \cite{bg}, is a quasimorphism that satisfies the H\" older condition.\\ \indent
\textit{Example}: Suppose $G$ acts cocompactly by isometries on a simply connected Riemannian manifold $X$ with all sectional curvatures bounded above by $-1$. Write $M=X/G$. Given a smooth $1$-form $\omega$ on $M$, we can lift $\omega$ to a $G$-invariant smooth $1$-form $\widetilde{\omega}$ on $X$. Fix an origin $o \in X$ and define $\varphi: G \to\mathbb{R}$ by
$$\varphi(g) = \int_o^{go} \widetilde{\omega}.$$
Note that
$$\varphi(gh)-\varphi(g)-\varphi(h) = \int_{\partial T(g,h)} \widetilde{\omega} = \int_{T(g,h)} d\widetilde{\omega}$$
where $T(g,h)$ denotes the triangle in $\mathbb{H}$ with vertices $o,go$ and $gho$. By compactness and hyperbolicity, the right hand side of the above is bounded uniformly in $g,h$. This proves that $\varphi$ is a quasimorphism. In \cite{pic} Picaud proved that these quasimorphisms satisfy Condition $(1)$.\\ \indent
Another example of a function satisfying Condition $(1)$ and Condition $(2)$ was mentioned in the introduction. Suppose $G$ acts properly discontinuously, convex cocompactly by isometries on a complete $\text{CAT}(-1)$ geodesic metric space $(X,d)$. Fix a finite generating set for $G$ and an origin $o$ for $X$. A result of Pollicott and Sharp (Proposition $3$ from \cite{18}) proves that the displacement function satisfies Condition $(1)$. Furthermore, it is easy to see that this function satisfies Condition $(2)$. See Lemma $4.6$ of \cite{me} for a more detailed discussion.\\ \indent
This concludes our brief survey of functions satisfying Condition $(1)$ and Condition $(2)$. See \cite{bg}, \cite{ef} and \cite{gh} for further examples as well as Chapter $3$ of \cite{horshamthesis} for a more comprehensive account of these functions.

\section{Properties of the Patterson--Sullivan Measure}

The results presented in \cite{me} and \cite{lox} as well as this paper rely on the work of Calegari and Fujiwara \cite{cf} that compares the Patterson-Sullivan measure $\nu$ to a natural measure $\mu$ on $\Sigma_A$. In this section we construct this measure and compare it to $\nu$. To deduce our results we need to extend the work in \cite{cf} to obtain a deeper understanding of how the measures $\mu$ and $\nu$ compare. \\ \indent
Suppose $G$ has associated subshift $\Sigma_A$ which is obtained from the directed graph $\mathcal{G}$. Let $V$ denote the vertex set of $\mathcal{G}$. For $v \in \mathbb{R}^V$, define the function $p:\mathbb{R}^V \to \mathbb{R}^V$ by
$$p(v) = \lim_{n\to\infty} \frac{1}{n}\sum_{k=0}^n \frac{A^kv}{\lambda^k}.$$
This function projects $v$ to the eigenspace of $A$ corresponding to the eigenvalue $\lambda$. Similarly, the function $r: \mathbb{R}^V \to \mathbb{R}^V$ defined by
$$ r(v) = \lim_{n\to\infty} \frac{1}{n}\sum_{k=0}^n \frac{(A^T)^kv}{\lambda^k}$$
projects $v$ to the eigenspace of $A^T$ corresponding to the eigenvalue $\lambda$. To obtain the error term in Theorem $1.4$ we need to know the rate of convergence associated to the limit defining $p$.
\begin{lemma}
For $v \in \mathbb{R}^V$ we have that
$$p(v) =  \frac{1}{n}\sum_{k=0}^n \frac{A^kv}{\lambda^k} + O\left(\frac{1}{n}\right)$$
where the implied constant depends only on $v$.
\end{lemma}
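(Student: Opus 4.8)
The plan is to diagonalise the action of the transition matrix $A$ over $\mathbb{C}$ according to the moduli of its eigenvalues and to estimate the three resulting pieces of the average $\frac1n\sum_{k=0}^n A^kv/\lambda^k$ separately. Write $\lambda=\rho(A)$; as recorded in Section $2$, this is the exponential growth rate of $W(n)$ and it is attained by the maximal components of $A$. Decompose $\mathbb{C}^V=E_0\oplus E_1\oplus E_2$, where $E_0=\ker(A-\lambda I)$, $E_1$ is the sum of the generalised eigenspaces for eigenvalues $\mu$ with $|\mu|=\lambda$ and $\mu\neq\lambda$, and $E_2$ is the sum of the generalised eigenspaces for eigenvalues with $|\mu|<\lambda$; each summand is $A$-invariant. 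The structural fact behind everything is that \emph{every eigenvalue of $A$ of modulus $\lambda$ is semisimple}. Indeed, a nontrivial Jordan block for such an eigenvalue would make $\frac1n\sum_{k=0}^n A^kv/\lambda^k$ diverge for generic $v$, which is incompatible with the fact (already used in Section $4$) that the limit $p(v)$ exists and equals the eigenprojection onto $E_0$; more concretely, such a Jordan block would force $\|A^n\|$, and hence $W(n)$, to grow like $n\lambda^n$, contradicting Coornaert's purely exponential estimate, while Proposition $2.4$ (no path joining distinct maximal components) is exactly what prevents a chain of maximal components and so keeps the index of every peripheral eigenvalue equal to $1$. Consequently $A$ acts on $E_0$ as $\lambda\cdot\mathrm{Id}$ and on $E_1$ as a diagonalisable operator all of whose eigenvalues lie on the circle of radius $\lambda$ and are different from $\lambda$.

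Next I would write $v=v_0+v_1+v_2$ with $v_j\in E_j$, observe that $p(v)=v_0$, and estimate the contribution of each $v_j$. Since $A^kv_0/\lambda^k=v_0$ for all $k$, the $E_0$ part of the average is $\frac{n+1}{n}v_0=v_0+O(1/n)$. For the $E_1$ part, decompose $v_1=\sum_\mu w_\mu$ with $w_\mu\in\ker(A-\mu I)$, the sum over the finitely many peripheral eigenvalues $\mu\neq\lambda$; then $\frac1n\sum_{k=0}^n A^kw_\mu/\lambda^k=\frac1n\cdot\frac{(\mu/\lambda)^{n+1}-1}{(\mu/\lambda)-1}\,w_\mu$, whose norm is at most $\frac{2\|w_\mu\|}{|\mu/\lambda-1|}\cdot\frac1n$ because $\mu/\lambda$ lies on the unit circle and is not equal to $1$; summing over the finitely many such $\mu$ shows the $E_1$ part is $O(1/n)$. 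For the $E_2$ part, the spectral radius of $A$ restricted to $E_2$ is some $\rho<\lambda$, so $\|A^kv_2/\lambda^k\|\le C(1+k)^D(\rho/\lambda)^k$ for a fixed integer $D$ and a constant $C$ depending linearly on $\|v_2\|$; this bound is summable in $k$, hence $\big\|\frac1n\sum_{k=0}^n A^kv_2/\lambda^k\big\|\le\frac1n\sum_{k=0}^n\|A^kv_2/\lambda^k\|\le\frac1n\sum_{k=0}^\infty C(1+k)^D(\rho/\lambda)^k=O(1/n)$.

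Adding the three estimates yields $\frac1n\sum_{k=0}^n A^kv/\lambda^k=v_0+O(1/n)=p(v)+O(1/n)$. Each implied constant above depends on $v$ only through $\|v_0\|,\|v_1\|,\|v_2\|$, hence through $\|v\|$ via the fixed linear decomposition, and otherwise only on the fixed matrix $A$; so the implied constant in the statement depends only on $v$, as required. I expect the one genuinely non-routine ingredient to be the semisimplicity of the modulus-$\lambda$ part of the spectrum of $A$; the rest is a bookkeeping exercise with geometric series. Since that semisimplicity is already implicit in the well-definedness of $p$ used in Section $4$ (and, as explained, follows from Coornaert's estimate together with Proposition $2.4$), the lemma should go through without further obstacles.
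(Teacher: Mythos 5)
Your proof is correct and follows essentially the same route as the paper: decompose $v$ along a Jordan basis for $A$, use Proposition~2.4 (no path between distinct maximal components) to conclude that every eigenvalue of modulus $\lambda$ is semisimple, and then estimate the averaged geometric sums. The paper's proof is a terse sketch of exactly these steps; you have simply supplied the explicit bounds.
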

\begin{proof}
Given $v \in \mathbb{R}^V$ we can write $v$ as a linear combination of elements in a Jordan basis for $A$. Since maximal components are disjoint, if an eigenvalue $x$ of $A$ has absolute value $\lambda$, then there does not exist a Jordan chain of length strictly greater than one associated to $x$. A simple calculation then shows that if $\widetilde{v}$ belongs to the generalised eigenspace associated to the eigenvalue $x\neq \lambda$, then
$$p(\widetilde{v}) = O\left(\frac{1}{n}\right).$$
The result follows.
\end{proof} 
Let $\textbf{1} \in \mathbb{R}^V$ denote the vector consisting of $1$ in each coordinate and let $v_\ast$ denote the vector consisting of a $1$ in the coordinate corresponding to the $\ast$ vertex and zeros elsewhere. Using $p$ and $r$, we define a measure $\mu$ on $\Sigma_A$ via a stochastic matrix $N : \mathbb{R}^V \to \mathbb{R}^V$ and vertex distribution $\rho: V\to \mathbb{R}$. For a vector $v\in \mathbb{R}^V$, let $v_j$ denote the coordinate of $v$ corresponding to the vertex $j \in V$. The matrix $N$ is defined as follows. If $p(\textbf{1})_i \neq 0$ then set
$$N_{i,j} = \frac{A_{i,j} p(\textbf{1})_j}{\lambda p(\textbf{1})_i}$$
and if $p(\textbf{1})_i=0$ let $N_{i,i}=1$ or $N_{i,j} = 0$ when $i\neq j$. 
The vertex distribution $\rho$ is defined by 
$$\rho(j) = p(\textbf{1})_j r(v_\ast)_j.$$
As for the usual construction of Markov measures, this defines a $\sigma$-invariant measure on $\Sigma_A$. We normalise this measure to obtain the probability measure $\mu$. There is a nice description of $\mu$ in terms of thermodynamic formalism.
\begin{proposition}
There exists $0< \alpha_i <1$ for $i=1,...,m$ with $\sum_{i=1}^m \alpha_i =1$ such that
\begin{equation}
\mu = \sum_{i=1}^m \alpha_i \mu_i,
\end{equation}
where each $\mu_i$ is the measure of maximal entropy for the system $(\Sigma_{B_i},\sigma)$.
\end{proposition}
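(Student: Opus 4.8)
The plan is to read the Markov measure $\mu$ off its defining data --- the stochastic matrix $N$ and the vertex distribution $\rho$ --- and to recognise it, component by component, as a Parry measure, i.e.\ as a measure of maximal entropy; Proposition $2.5$ is what makes this decomposition possible.

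First I would locate the support of $\rho$. The vector $p(\textbf{1})$ is the projection of $\textbf{1}$ onto the $\lambda$-eigenspace of $A$, so it is a non-negative right $\lambda$-eigenvector of $A$; by the same Jordan-form considerations as in the proof of Lemma $4.1$ one has $(A^{n}\textbf{1})_{i}=p(\textbf{1})_{i}\lambda^{n}+o(\lambda^{n})$, and since $\textbf{1}$ is strictly positive this gives $p(\textbf{1})_{i}>0$ precisely when some maximal component is reachable from the vertex $i$. Dually, $r(v_{\ast})$ is a non-negative left $\lambda$-eigenvector of $A$, and $r(v_{\ast})_{j}>0$ precisely when $j$ lies on a directed path issuing from $\ast$ that meets some maximal component. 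If both conditions hold at a vertex $j$, then $j$ reaches some $B_{k}$ and is reached from some $B_{k'}$; Proposition $2.5$ forces $k=k'$, so $j$ lies on a cycle through $B_{k}$ and hence $j\in B_{k}$. Therefore $\rho(j)=p(\textbf{1})_{j}r(v_{\ast})_{j}$ is supported on $\bigcup_{k=1}^{m}B_{k}$. (Discarding vertices not reachable from $\ast$ --- which changes neither the correspondence with $G$ nor $\mu$ --- we may assume every vertex, in particular every maximal component, is reachable from $\ast$; at least one maximal component survives, by Coornaert's lower bound on $W(n)$.)

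Next I would show that $N$ leaves each $B_{k}$ invariant and identify $N$ there. If $i\in B_{k}$ and $N_{i,j}>0$, then $A_{i,j}=1$ and $p(\textbf{1})_{j}>0$, so $j$ is reachable from $B_{k}$ and itself reaches a maximal component; as above this gives $j\in B_{k}$. Moreover, any out-neighbour $j\notin B_{k}$ of a vertex of $B_{k}$ has $p(\textbf{1})_{j}=0$, since $j$ is reachable from $B_{k}$ and hence, by Proposition $2.5$, reaches no maximal component; so the identity $A\,p(\textbf{1})=\lambda\,p(\textbf{1})$ restricts to $B_{k}\bigl(p(\textbf{1})|_{B_{k}}\bigr)=\lambda\bigl(p(\textbf{1})|_{B_{k}}\bigr)$, and, $B_{k}$ being irreducible, $w^{(k)}:=p(\textbf{1})|_{B_{k}}$ is its strictly positive Perron eigenvector. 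By the symmetric argument applied to in-neighbours, $r(v_{\ast})|_{B_{k}}$ is a non-negative left Perron eigenvector of $B_{k}$, hence equals $c_{k}u^{(k)}$ for a scalar $c_{k}>0$ (positive because $B_{k}$ is reachable from $\ast$) and the positive left Perron eigenvector $u^{(k)}$. Consequently $N|_{B_{k}}$ is exactly the Parry transition matrix of $(\Sigma_{B_{k}},\sigma)$, with $(i,j)$ entry $(B_{k})_{i,j}\,w^{(k)}_{j}/(\lambda w^{(k)}_{i})$, and $\rho|_{B_{k}}$ is a positive multiple of its stationary vector $i\mapsto u^{(k)}_{i}w^{(k)}_{i}$; write $\rho|_{B_{k}}=c_{k}\pi^{(k)}$ with $\pi^{(k)}$ a probability vector (absorbing the normalisation into $c_k$).

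Finally, since $\rho$ is supported on $\bigcup_{k}B_{k}$ and $N$ preserves each component, a cylinder whose first symbol lies in $B_{k}$ has unnormalised $\mu$-measure equal to $c_{k}$ times its measure for the Parry measure $\mu_{k}$ of $(\Sigma_{B_{k}},\sigma)$ --- which is the measure of maximal entropy by \cite{parry} --- while a cylinder meeting no maximal component has measure $0$. Hence the unnormalised measure is $\sum_{k=1}^{m}c_{k}\mu_{k}$; dividing by the total mass $\sum_{k=1}^{m}c_{k}\in(0,\infty)$ yields $\mu=\sum_{k=1}^{m}\alpha_{k}\mu_{k}$ with $\alpha_{k}\in(0,1)$ and $\sum_{k}\alpha_{k}=1$, as claimed. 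The only genuine difficulty is the bookkeeping in the two middle paragraphs: correctly placing the zeros of $p(\textbf{1})$ and $r(v_{\ast})$ relative to the block-triangular form of $A$, which is precisely where Proposition $2.5$ does the work. Granting that, the remainder is the classical Perron--Frobenius description of Parry measures.
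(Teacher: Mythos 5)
Your argument is correct and follows essentially the same route as the paper's: restrict $p(\textbf{1})$ and $r(v_\ast)$ to each maximal component to recognise the Parry eigenvector data, identify $N|_{B_k}$ with the Parry transition matrix, and observe that $\mu$ vanishes off $\bigcup_k\Sigma_{B_k}$. You have simply unpacked the paper's ``one can check'' and ``it is clear that'' steps by explicitly locating the zero sets of $p(\textbf{1})$ and $r(v_\ast)$ via Proposition~2.5 (the only minor quibble is that your stated characterisation of the support of $r(v_\ast)$ should read ``$j$ is reachable from a maximal component that is itself reachable from $\ast$'', which is what you in fact use in the next sentence).
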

\begin{proof}
Choose a maximal component $B_i$. One can check that the vector obtained from restricting $p(\textbf{1})$ or $r(v_\ast)$ to the vertices in $B_i$  is a right or left eigenvector respectively for $B_i$ (with eigenvalue $\lambda$). Then by comparing the construction of $\mu$ to Parry's construction of the measure of maximal entropy for a subshift of finite type \cite{parry}, we see that the restriction of $\mu$ to the maximal component $\Sigma_{B_i}$ is up to scaling, the measure of maximal entropy $\mu_i$ on this component. Furthermore, from the definitions of $p$ and $r$ and the fact that $\mu$ is $\sigma$-invariant, it is clear that $\mu$ assigns zero mass to the complement of the union of the maximal components. The result follows.
\end{proof}
Let $A'$ denote the matrix $A$ with the row/column corresponding to the $0$ vertex removed. 
\begin{definition}
Define sets $Y, Y_1,...,Y_m \subset \Sigma_{A'}$ by 
$$ Y = \{ x \in \Sigma_{A'} : x_0 =\ast\},$$
$$Y_i = \{x \in Y : x \text{ eventual enters } B_i \text{ and never leaves}\}.$$
Let $h: Y \to \partial G$ be the natural map associated to the bijection defined in Definition $2.2$. Given $y \in Y$, we use $h(y)_n$ to denote the $n$th step in the geodesic ray determined by $y$.
\end{definition}
There is a unique measure $\widehat{\nu}$ on $Y$ that pushes forward under $h$ to the Patterson-Sullivan measure on $\partial G$. We denote the pushforward map by $h_\ast$ so that $h_\ast\widehat{\nu} = \nu$. The measure $\widehat{\nu}$ can be constructed as in Section $4$ of \cite{cf}. We will not provide the construction here but will instead present the properties of $\widehat{\nu}$ that we require for our proofs. One of these properties is the following. We can explicitly calculate the $\widehat{\nu}$ measure of certain subsets of $\Sigma_{A'}$ called cylinder sets. Given a finite path in $\mathcal{G}$ let $[y]$ to denote the elements in $\Sigma_{A'}$ that have $y$ as an initial segment.
\begin{lemma}
Let $y$ be a finite path in $\mathcal{G}$ starting at $\ast$. We have that
$$\widehat{\nu}([y]) = \frac{p(\textup{\textbf{1}})_{v_y}}{p(\textup{\textbf{1}})_{\ast}} \lambda^{-|y|},$$
where $|y|$ is the length of $y$ and $v_y$ denotes the last vertex in $y$.
\end{lemma}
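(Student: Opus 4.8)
The plan is to exploit the defining property $h_\ast\widehat\nu=\nu$ together with the description of the Patterson--Sullivan measure as the weak-$\ast$ limit of the normalized counting measures $\mu_n=\bigl(\sum_{|g|\le n}\lambda^{-|g|}\bigr)^{-1}\sum_{|g|\le n}\lambda^{-|g|}\delta_g$, and to reduce the computation of $\widehat\nu([y])$ to a counting problem that is dispatched by the Ces\`aro description of the projection $p$. Since $[y]$ is a clopen cylinder it is a continuity set for every Borel measure on $\Sigma_{A'}$, and $h$ is injective off a $\widehat\nu$-null set (this is implicit in the uniqueness of $\widehat\nu$); so it suffices to show that $\nu$ of the ``cone over $y$'', i.e. the set $h([y])$ of boundary points admitting a geodesic coding beginning with the path $y$, equals $\lambda^{-|y|}\,p(\mathbf{1})_{v_y}/p(\mathbf{1})_\ast$.

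Set $k=|y|$ and $v=v_y$. By properties $(2)$ and $(3)$ in Definition $2.2$, group elements $g$ with $|g|=m$ correspond bijectively to admissible length-$m$ paths in $\mathcal G$ from $\ast$, and $i(g)$ begins with $y$ exactly when that path factors as $y$ followed by an admissible length-$(m-k)$ path issuing from $v$. Hence, for $m\ge k$, the number of length-$m$ group elements in the cone over $y$ equals $W_v(m-k)$, where $W_v(i)$ is the number of admissible length-$i$ paths of $\mathcal G$ (not using the $0$-vertex) starting at $v$; in particular $W(m)=W_\ast(m)$ is Coornaert's counting function. A short bookkeeping argument with the augmented graph gives $(A^i\mathbf{1})_v=\sum_{j=0}^{i}W_v(j)$ (up to a harmless additive constant when $v=\ast$, since no edge leaves $\ast$ to $0$), so, summing by parts and using $(A^i\mathbf{1})_v=O(\lambda^i)$ to kill the boundary terms, Lemma $4.1$ (indeed just the existence of the limit defining $p$) yields
$$\frac1N\sum_{i=0}^{N}\lambda^{-i}W_v(i)\ \longrightarrow\ (1-\lambda^{-1})\,p(\mathbf{1})_v\qquad(N\to\infty),$$
and likewise with $v$ replaced by $\ast$.

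Assembling the pieces, the cone over $y$ is a $\mu_n$-continuity set up to group elements that accumulate only on its topological boundary in $\partial G$, a set one checks is $\nu$-null; hence
$$\nu\bigl(h([y])\bigr)=\lim_{n\to\infty}\frac{\sum_{m=k}^{n}\lambda^{-m}W_v(m-k)}{\sum_{m=0}^{n}\lambda^{-m}W(m)}=\lim_{n\to\infty}\frac{\lambda^{-k}\sum_{i=0}^{n-k}\lambda^{-i}W_v(i)}{\sum_{m=0}^{n}\lambda^{-m}W(m)}=\lambda^{-k}\,\frac{(1-\lambda^{-1})\,p(\mathbf{1})_v}{(1-\lambda^{-1})\,p(\mathbf{1})_\ast}=\lambda^{-|y|}\,\frac{p(\mathbf{1})_{v_y}}{p(\mathbf{1})_\ast},$$
the factor $(1-\lambda^{-1})$ cancelling. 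Transporting this back via $h_\ast\widehat\nu=\nu$ and the essential injectivity of $h$ gives $\widehat\nu([y])=\lambda^{-|y|}p(\mathbf{1})_{v_y}/p(\mathbf{1})_\ast$.

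I expect the main obstacle to be the measure-theoretic bookkeeping linking $\widehat\nu$ on cylinders to the weak-$\ast$ limit: one must verify that $h$ is injective off a $\widehat\nu$-null set (so that $\widehat\nu([y])=\nu(h([y]))$) and that the cone over $y$ is genuinely a $\nu$-continuity set, which is not obvious since $\partial G$ need not be totally disconnected. This is, however, exactly what the construction of $\widehat\nu$ in Section $4$ of \cite{cf} provides, so a shorter alternative is to quote that construction directly and observe that the displayed identity is then an immediate Ces\`aro computation with the matrix $A$, with Coornaert's estimates controlling the normalizations.
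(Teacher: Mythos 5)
Your approach is genuinely different from the paper's. The paper disposes of this lemma in one sentence by quoting the explicit construction of $\widehat\nu$ in Section~4 of Calegari--Fujiwara: there $\widehat\nu$ is \emph{defined} on cylinders by precisely this formula (up to the $p(\mathbf{1})_\ast$ normalisation), and the content of that section is to verify $h_\ast\widehat\nu=\nu$. You instead try to run the argument backwards, starting from the weak-$\ast$ definition of $\nu$ and computing cylinder masses by counting. The Ces\`aro arithmetic at the heart of your computation is correct: writing $S_v(i)=\sum_{j\le i}W_v(j)$ one has $(A^i\mathbf{1})_v=S_v(i)$ (with a $-1$ correction at $\ast$ that dies in the average), and Abel summation plus $S_v(N)=O(\lambda^N)$ gives $\frac1N\sum_{i\le N}\lambda^{-i}W_v(i)\to(1-\lambda^{-1})p(\mathbf{1})_v$, from which the ratio $\lambda^{-|y|}p(\mathbf{1})_{v_y}/p(\mathbf{1})_\ast$ drops out.

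However, the two steps you yourself flag as the ``main obstacle'' are real gaps, not bookkeeping. First, the reduction $\widehat\nu([y])=\nu(h([y]))$ requires $h$ to be injective off a $\widehat\nu$-null set, and your parenthetical justification --- that this is ``implicit in the uniqueness of $\widehat\nu$'' --- does not hold up: a priori, non-injectivity of $h$ on a positive-measure set does not immediately produce a second lift of $\nu$, so uniqueness alone is not an argument for essential injectivity, and certainly not an implicit one. Second, to identify $\nu(h([y]))$ with the limit of $\mu_n$-masses you need the set $\{g: i(g)\text{ starts with }y\}\cup h([y])$ to be a $\nu$-continuity set in $G\cup\partial G$; as you note, $\partial G$ need not be totally disconnected, so this is not automatic, and you give no argument. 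You then observe, correctly, that both of these points are exactly what the construction in [cf, \S4] provides --- but that is precisely the citation the paper itself makes, so the ``self-contained'' route collapses back onto the paper's one-line proof. In short: the counting and Ces\`aro part of your proposal is sound and offers a more concrete picture of where the formula comes from, but it is not a complete alternative proof, because the measure-theoretic passage from $\nu$ to $\widehat\nu$ and the continuity-set property are outsourced to the same reference rather than proved.
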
 

\begin{proof}
This is a simple calculation that can be found in Section $4$ of \cite{cf}. Note that in this work, we are using a slightly different scaling for $\widehat{\nu}$. This introduces the $p(\textbf{1})_{\ast}$ term, which is not present in \cite{cf}.
\end{proof}
For $k \in \mathbb{Z}_{\ge 0}$, let $\sigma_\ast^k\widehat{\nu}$ denote the pushforward of $\widehat{\nu}$ under $\sigma^k$. The following lemma compares these pushforward measures to the measure $\mu$. 

\begin{lemma}
For each $v \in V$ with $\mu[v]>0$ and $k \in \mathbb{Z}_{\ge 0}$ there exists $\alpha_v^k \ge0$ such that
$$\sigma_\ast^k\widehat{\nu}|_{[v]} = \alpha_v^k \mu|_{[v]}.$$
There exists a length $k$ path from $\ast$ to $v$ if and only if $\alpha_v^k >0$. If $\mu[v]=0$ we define $\alpha_v^k = \widehat{\nu}(\sigma^{-k}[v])$ for all $k\in\mathbb{Z}_{\ge 0}$. Furthermore,  
\[   
 \frac{1}{n} \sum_{k=0}^n \alpha_v^k = 
     \begin{cases}
       1 +O(n^{-1}) &\text{ if } \mu[v]>0 \\
       O(n^{-1}) &\text{ if } \mu[v]=0.
     \end{cases}
\]
The implied constants can be taken to be independent of $v$ and $n$.
\end{lemma}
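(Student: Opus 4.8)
The plan is to make the asserted identity completely explicit by evaluating $\sigma_\ast^k\widehat{\nu}$ and $\mu$ on cylinder sets, reading off $\alpha_v^k$ from the comparison, and then averaging over $k$ using Lemma 4.1 applied to $A^\top$. Fix $v\in V$ with $\mu[v]>0$; by Proposition 4.3 the vertex $v$ lies in a maximal component $B_i$. Let $[w]\subseteq[v]$ be the cylinder of a finite path $w=(w_0,\dots,w_\ell)$ in $\mathcal G$ of length $\ell$ with $w_0=v$. Since $\sigma^{-k}[w]\cap Y$ is the disjoint union, over the $A^k(\ast,v)$ length-$k$ paths $y=(\ast,\dots,v)$ in $\mathcal G$, of the cylinders of the concatenated paths (each of length $k+\ell$ and ending at $w_\ell$), Lemma 4.6 gives
$$\sigma_\ast^k\widehat{\nu}([w]) = A^k(\ast,v)\,\frac{p(\mathbf{1})_{w_\ell}}{p(\mathbf{1})_{\ast}}\,\lambda^{-(k+\ell)}.$$
On the other hand, the transition factors in the Markov measure $\mu$ telescope; whenever $\mu[w]>0$ — which by Proposition 4.3 forces $w$ to remain in $B_i$, so that $p(\mathbf{1})_{w_j}>0$ for every $j$ — one computes $\mu([w]) = Z^{-1}\,r(v_\ast)_{v}\,p(\mathbf{1})_{w_\ell}\,\lambda^{-\ell}$, where $Z=\sum_{u\in V}p(\mathbf{1})_u\,r(v_\ast)_u$ is the normalising constant. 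Dividing, the factors $p(\mathbf{1})_{w_\ell}$ and $\lambda^{-\ell}$ cancel and only an initial term survives, so I would set
$$\alpha_v^k := \frac{Z\,A^k(\ast,v)\,\lambda^{-k}}{p(\mathbf{1})_{\ast}\,r(v_\ast)_{v}},$$
which visibly does not depend on $w$.

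To turn the equality $\sigma_\ast^k\widehat{\nu}([w])=\alpha_v^k\mu([w])$ (valid for $\mu[w]>0$) into an identity of measures, I would observe that any sub-cylinder $[w]\subseteq[v]$ with $\mu[w]=0$ has a path $w$ that leaves $B_i$; by the block-triangular form of $A$ its endpoint $w_\ell$ then lies in a strictly lower component, and since $w_\ell$ is reachable from the maximal component $B_i$, Proposition 4.4 prevents $w_\ell$ from reaching any maximal component at all. Hence $p(\mathbf{1})_{w_\ell}=0$ — as $p(\mathbf{1})$ is a $\lambda$-eigenvector of $A$ and every such eigenvector vanishes on the forward-closed set of vertices reaching no maximal component, on which $A$ has spectral radius strictly less than $\lambda$ — and the first display forces $\sigma_\ast^k\widehat{\nu}([w])=0=\alpha_v^k\mu([w])$ too. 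Since cylinders generate the Borel $\sigma$-algebra and both measures are finite, $\sigma_\ast^k\widehat{\nu}|_{[v]}=\alpha_v^k\mu|_{[v]}$ follows. The formula for $\alpha_v^k$, with $Z$, $p(\mathbf{1})_\ast$ and $r(v_\ast)_v$ all positive, shows $\alpha_v^k>0$ precisely when there is a length-$k$ path from $\ast$ to $v$; in the case $\mu[v]=0$ one argues identically from the defining formula $\alpha_v^k=\widehat{\nu}(\sigma^{-k}[v])=A^k(\ast,v)\,p(\mathbf{1})_v\,p(\mathbf{1})_\ast^{-1}\lambda^{-k}$, after discarding those (necessarily $\widehat{\nu}$- and $\mu$-null) vertices that reach no maximal component and on which $p(\mathbf{1})$ vanishes.

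For the averaged estimate I would use $A^k(\ast,v)=((A^\top)^k v_\ast)_v$ (recall $v_\ast$ is the indicator vector of $\ast$), so that by Lemma 4.1 applied to $A^\top$ — whose maximal components are likewise disjoint, so it too has no Jordan chains of length $>1$ at $\lambda$ —
$$\frac1n\sum_{k=0}^n \frac{A^k(\ast,v)}{\lambda^k} = \frac1n\sum_{k=0}^n \frac{((A^\top)^k v_\ast)_v}{\lambda^k} = r(v_\ast)_v + O(n^{-1}),$$
with implied constant depending only on the fixed vector $v_\ast$. When $\mu[v]>0$ this gives $\tfrac1n\sum_{k=0}^n \alpha_v^k = Z\bigl(r(v_\ast)_v+O(n^{-1})\bigr)\bigl(p(\mathbf{1})_\ast r(v_\ast)_v\bigr)^{-1} = Z\,p(\mathbf{1})_\ast^{-1}+O(n^{-1})$, where the factor $r(v_\ast)_v^{-1}$ is absorbed into the constant using that $r(v_\ast)_v$ is bounded below over the finitely many $v$ with $\mu[v]>0$. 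It then remains to establish the \emph{exact} identity $Z=p(\mathbf{1})_\ast$: since $(A^\top)^k=(A^k)^\top$, the matrix of $r$ is the transpose of the matrix $P$ of the projection $p$, and $P$ is idempotent, so $Z=\langle p(\mathbf{1}),r(v_\ast)\rangle=\mathbf{1}^\top(P^\top)^2v_\ast=\mathbf{1}^\top P^\top v_\ast=(P\mathbf{1})_\ast=p(\mathbf{1})_\ast$. This settles the case $\mu[v]>0$. When $\mu[v]=0$ one has $p(\mathbf{1})_v\,r(v_\ast)_v=0$, so in $\tfrac1n\sum_{k=0}^n\alpha_v^k = p(\mathbf{1})_v\,p(\mathbf{1})_\ast^{-1}\bigl(r(v_\ast)_v+O(n^{-1})\bigr)$ the main term vanishes and the sum is $O(n^{-1})$. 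In both cases the finitely many implied constants may be replaced by their maximum, giving uniformity in $v$ and $n$.

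I expect the main obstacle to be the case $\mu[w]=0$ in the second step: one must be certain that $\sigma_\ast^k\widehat{\nu}$ assigns no mass to cylinders escaping the maximal component $B_i$, and this is exactly where Proposition 4.4 — hence Coornaert's purely exponential growth of $W(n)$ — is indispensable, since it forces $p(\mathbf{1})$ to vanish at every vertex lying strictly beyond a maximal component. A secondary point requiring care is that $Z=p(\mathbf{1})_\ast$ must be an exact identity rather than an asymptotic statement, and that the uniform lower bound on $r(v_\ast)_v$ over maximal vertices is what delivers the uniformity of the error term; the remainder is bookkeeping with Lemmas 4.1 and 4.6 and Parry's formula for the Markov measure.
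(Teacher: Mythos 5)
Your argument is correct and, in substance, is precisely the ``simple calculation using the definition of $\widehat{\nu}$'' that the paper invokes and then delegates to Calegari--Fujiwara's Lemma~4.22, with the $O(n^{-1})$ rate coming from Lemma~4.1; you have simply carried it out in full. The explicit formula $\alpha_v^k = Z\,A^k(\ast,v)\,\lambda^{-k}\bigl(p(\mathbf{1})_\ast\,r(v_\ast)_v\bigr)^{-1}$, the reduction to $Z = p(\mathbf{1})_\ast$ via idempotence of the spectral projection $P$ (so that the averaged constant is exactly $1$ rather than merely some positive number), and the application of Lemma~4.1 to $A^\top$ to quantify the Ces\`aro convergence of $A^k(\ast,v)/\lambda^k$ to $r(v_\ast)_v$ are all sound; the uniformity in $v$ follows as you say from finiteness of $V$ and positivity of $r(v_\ast)_v$ on vertices with $\mu[v]>0$. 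Two small points worth tidying: first, several cross-references are shifted (the $\widehat{\nu}$-cylinder formula is Lemma~4.4, the support of $\mu$ in $\bigcup_i\Sigma_{B_i}$ is Proposition~4.2, and the disjointness of maximal components is Proposition~2.4). Second, in the step where a path leaving $B_i$ forces $p(\mathbf{1})_{w_\ell}=0$, Proposition~2.4 only rules out reaching a \emph{different} maximal component; to exclude a return to $B_i$ itself you should add the one-line observation that $w_\ell$ is reachable from $B_i$, so if $w_\ell$ could reach $B_i$ it would lie in that strongly connected component, contradicting the assumption that the path left $B_i$. With these cosmetic fixes the proof is complete.
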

\begin{proof}
This is a consequence of Lemma $4.1$, the construction of $\widehat{\nu}$ and the proof of Lemma $4.22$ in \cite{cf}. A simple calculation using the definition of $\widehat{\nu}$ shows the existence of $\alpha_v^k$ satisfying the first condition of the lemma. The convergence associated to the final statement is proved in Lemma $4.22$ of \cite{cf}. By inspecting the proof of this lemma, we see that Lemma $4.1$ quantifies the convergence as $O(n^{-1})$.
\end{proof}
\noindent It follows that
$$\frac{1}{n} \sum_{k=0}^n \sigma_\ast^k \widehat{\nu}$$
converges in the weak star topology to the measure $\mu$. There is a much stronger relationship between $\widehat{\nu}$ and $\mu$ however. Given two measures, $\lambda_1$ and $\lambda_2$ on $\Sigma_{A}$, recall that their total variation $\|\lambda_1 - \lambda_2\|_{TV}$ is given by $\sup_{E \subset \Sigma_{A}} |\lambda_1(E)-\lambda_2(E)|$.

\begin{proposition}
We have that,
$$ \left\| \frac{1}{n} \sum_{j=0}^{n} \sigma_\ast^j \widehat{\nu}-\mu\right\|_{TV} = O(n^{-1})$$
as $n\to\infty$.
\end{proposition}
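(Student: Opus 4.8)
The plan is to treat this purely as a consequence of Lemma $4.8$, reducing the total variation estimate to the Cesàro control of the coefficients $\alpha_v^k$. First I would fix an arbitrary measurable set $E \subset \Sigma_{A'}$ and decompose it along the one-symbol cylinders: writing $E_v = E \cap [v]$, we have $E = \bigsqcup_{v \in V} E_v$, hence $\sigma_\ast^k \widehat{\nu}(E) = \sum_{v \in V} \sigma_\ast^k \widehat{\nu}(E_v)$, while $\mu(E) = \sum_{v : \mu[v]>0} \mu(E_v)$ since $\mu$ is carried by the maximal components. For vertices $v$ with $\mu[v]>0$, Lemma $4.8$ gives the exact identity $\sigma_\ast^k \widehat{\nu}(E_v) = \alpha_v^k \mu(E_v)$; for vertices $v$ with $\mu[v]=0$ I would use only the crude bound $\sigma_\ast^k \widehat{\nu}(E_v) \le \sigma_\ast^k \widehat{\nu}([v]) = \alpha_v^k$, which turns out to be enough.

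Next I would average over $k = 0, \dots, n$ and subtract $\mu(E)$, which gives
$$\frac{1}{n} \sum_{k=0}^{n} \sigma_\ast^k \widehat{\nu}(E) - \mu(E) = \sum_{v : \mu[v]>0} \Bigl( \frac{1}{n} \sum_{k=0}^{n} \alpha_v^k - 1 \Bigr) \mu(E_v) + \sum_{v : \mu[v]=0} \frac{1}{n} \sum_{k=0}^{n} \sigma_\ast^k \widehat{\nu}(E_v).$$
The first sum I would bound in absolute value by $\bigl( \max_v \bigl| \tfrac{1}{n} \sum_{k=0}^{n} \alpha_v^k - 1 \bigr| \bigr) \sum_{v:\mu[v]>0} \mu(E_v)$; the maximum is $O(n^{-1})$ by the uniform estimate in Lemma $4.8$, and $\sum_{v:\mu[v]>0} \mu(E_v) = \mu(E) \le 1$, so this term is $O(n^{-1})$. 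For the second sum I would use $\sigma_\ast^k \widehat{\nu}(E_v) \le \alpha_v^k$ and the estimate $\tfrac{1}{n} \sum_{k=0}^{n} \alpha_v^k = O(n^{-1})$ (again uniform in $v$); since $V$ is finite, summing over the finitely many such vertices keeps this $O(n^{-1})$. Taking the supremum over all measurable $E$ then yields the claimed bound on $\bigl\| \tfrac{1}{n} \sum_{j=0}^n \sigma_\ast^j \widehat{\nu} - \mu \bigr\|_{TV}$.

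Since the analytic content has already been isolated in Lemmas $4.1$ and $4.8$, I do not expect a real obstacle here: the proposition is essentially bookkeeping. The one point that needs care is that a total variation bound must be uniform over \emph{all} measurable $E$, so one must be sure that the uniformity in $v$ of the constants in Lemma $4.8$ — which ultimately comes from the disjointness of the maximal components (Proposition $2.6$) via Lemma $4.1$ — is not lost when the per-cylinder contributions are summed over the vertex set. It is also important not to overlook the trivial bound $\sigma_\ast^k \widehat{\nu}(E \cap [v]) \le \alpha_v^k$ for the vertices $v$ with $\mu[v] = 0$, because for those vertices Lemma $4.8$ supplies no exact comparison with $\mu$.
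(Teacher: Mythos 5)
Your argument is correct and is essentially identical to the paper's own proof: both decompose $E$ into the one-symbol cylinders $[v]$, use $\sigma_\ast^k\widehat{\nu}|_{[v]} = \alpha_v^k\,\mu|_{[v]}$ for $\mu[v]>0$ together with the trivial bound $\sigma_\ast^k\widehat{\nu}(E\cap[v]) \le \alpha_v^k$ for $\mu[v]=0$, sum over the finite vertex set, and invoke the uniform Ces\`aro estimates on the $\alpha_v^k$. One small slip in your references: the lemma supplying the coefficients $\alpha_v^k$ and their Ces\`aro control is Lemma 4.5, not Lemma 4.8 (the latter concerns $\|\widehat{\nu}_n-\widehat{\nu}\|_{TV}$).
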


\begin{proof}
For any $E \subset \Sigma_{A}$,
\begin{align*}
\left| \frac{1}{n} \sum_{j=0}^{n} \sigma_\ast^j \widehat{\nu}(E)-\mu(E)\right| &= \left| \frac{1}{n} \sum_{j=0}^{n} \sum_{v\in V} \left( \sigma_\ast^j \widehat{\nu}|_{[v]}(E)-\mu|_{[v]}(E)\right) \right| \\
&\le \sum_{\substack{v\in V \\ \mu [v]>0}} \left|\frac{1}{n} \sum_{j=0}^{n} \alpha_v^j -1\right| + \sum_{\substack{v\in V \\ \mu [v]=0}} \left|\frac{1}{n} \sum_{j=0}^{n} \alpha_v^j\right|,
\end{align*}
where $\alpha_v^j$ are as defined in the previous lemma. Applying the previous lemma concludes the proof. 
\end{proof}
We will need the following definition and lemma later.
\begin{definition}
For each $j \in \mathbb{Z}_{\ge 0}$ let
$$A_j=\left(\sigma^{-j}\left(\bigcup_i \Sigma_{B_i}\right) \backslash \bigcup_{k=0}^{j-1} \sigma^{-k}\left(\bigcup_i \Sigma_{B_i}\right)\right) \cap Y.$$ 
Then, for each $n\in\mathbb{Z}_{\ge 0}$, define a measure $\widehat{\nu}_n$ on $\Sigma_{A'}$ by
$$ \widehat{\nu}_n(E) = \widehat{\nu}\left(E \cap \bigcup_{j=0}^n A_j\right)$$
for $E \subset \Sigma_{A'}$.
\end{definition}
\noindent Intuitively, each $A_j$ consists of elements in $\Sigma_{A'}$ that correspond to a path in $\mathcal{G}$ that starts at $\ast$, enters a maximal component on exactly its $j$th step and then never leaves this component.
\begin{lemma}
There exists $0<\theta<1$ such that $ \left\| \widehat{\nu}_n -  \widehat{\nu} \right\|_{TV} = O(\theta^n),$
as $n\to\infty$. The implied constant is independent of $n$.
\end{lemma}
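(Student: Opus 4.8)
The plan is to recognise $\widehat\nu_n$ as the restriction of $\widehat\nu$ to an explicit set, reduce the total variation distance to the $\widehat\nu$-mass of a tail event, and then estimate that mass using the cylinder formula (Lemma~4.4) together with the block structure of the transition matrix $A$.

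First I would simplify the set $\bigcup_{j=0}^n A_j$. Disjointifying the telescoping union in Definition~4.7 gives $\bigcup_{j=0}^n A_j=\bigl(\bigcup_{j=0}^n\sigma^{-j}(\bigcup_i\Sigma_{B_i})\bigr)\cap Y$, and since $\sigma^{-j}(\bigcup_i\Sigma_{B_i})\subseteq\sigma^{-n}(\bigcup_i\Sigma_{B_i})$ whenever $j\le n$, this is simply $\sigma^{-n}(\bigcup_i\Sigma_{B_i})\cap Y$. As $\widehat\nu_n(E)=\widehat\nu\bigl(E\cap\bigcup_{j=0}^n A_j\bigr)$ and $\widehat\nu$ is carried by $Y$, it follows that
$$\|\widehat\nu_n-\widehat\nu\|_{TV}=\sup_{E}\widehat\nu\Bigl(E\setminus\bigcup_{j=0}^n A_j\Bigr)=\widehat\nu\Bigl(Y\setminus\sigma^{-n}\bigl(\bigcup_i\Sigma_{B_i}\bigr)\Bigr).$$
Because the maximal components are disjoint and no path joins two of them (Proposition~2.4), any $y\in Y$ with $\sigma^n(y)\notin\bigcup_i\Sigma_{B_i}$ must have $y_k\notin\bigcup_i V(B_i)$ for some $k\ge n$. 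Hence, writing $D_k=\{y\in Y:y_k\notin\bigcup_i V(B_i)\}$,
$$\|\widehat\nu_n-\widehat\nu\|_{TV}\le\sum_{k\ge n}\widehat\nu(D_k),$$
so it is enough to find $\theta\in(0,1)$ and $C>0$ with $\widehat\nu(D_k)\le C\theta^k$ for all $k$; summing the geometric series then yields the lemma, with the same $\theta$ and an implied constant $C/(1-\theta)$ independent of $n$.

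To bound $\widehat\nu(D_k)$ I would split $D_k$ into the cylinders determined by the initial segments $(y_0,\dots,y_k)$ and apply Lemma~4.4, which gives
$$\widehat\nu(D_k)\le\frac{\lambda^{-k}}{p(\textbf{1})_\ast}\sum_{v\notin\bigcup_i V(B_i)}p(\textbf{1})_v\,\#\{\text{paths of length }k\text{ from }\ast\text{ to }v\}.$$
Only vertices $v$ with $p(\textbf{1})_v\ne0$ contribute. Since $p$ projects onto the $\lambda$-eigenspace of $A$ and $p(\textbf{1})\ge0$, unwinding the definition of $p$ shows that $p(\textbf{1})_v\ne0$ precisely when the Cesàro averages $\tfrac1m\sum_{j\le m}\lambda^{-j}\#\{\text{paths of length }j\text{ from }v\}$ stay bounded away from $0$, that is, precisely when some maximal component is reachable from $v$. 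If $v\notin\bigcup_i V(B_i)$ and $p(\textbf{1})_v\ne0$, then $v$ cannot also be reachable \emph{from} a maximal component: a path from a maximal component to $v$ followed by a path from $v$ into a maximal component would either connect two distinct maximal components (contradicting Proposition~2.4) or return to the first one, which would force $v$ into that component. Therefore every path from $\ast$ to such a $v$ stays among the non-maximal vertices, so its count is an entry of $\widehat A^k$, where $\widehat A$ is the principal submatrix of $A$ on the non-maximal vertices. Being block lower-triangular with diagonal blocks the non-maximal components, each of spectral radius strictly below $\lambda$, the matrix $\widehat A$ has spectral radius $\lambda_0<\lambda$, so $\|\widehat A^k\|=O\bigl((\lambda_0')^k\bigr)$ for any fixed $\lambda_0'\in(\lambda_0,\lambda)$. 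Substituting this back gives $\widehat\nu(D_k)=O\bigl((\lambda_0'/\lambda)^k\bigr)$, and we may take $\theta=\lambda_0'/\lambda$.

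The step I expect to be the main obstacle is pinning down the support of $p(\textbf{1})$ and converting the condition $p(\textbf{1})_v\ne0$ into the reachability statement above, because it is exactly this that allows Proposition~2.4 to eliminate the troublesome vertices — the non-maximal $v$ reachable from a maximal component, through which the path count from $\ast$ does \emph{not} decay relative to $\lambda^k$. A secondary point requiring attention is that the sets $A_j$ are defined via the shift map, i.e.\ as tail events, so Proposition~2.4 is already needed in the first step to replace the tail condition $\sigma^n(y)\notin\bigcup_i\Sigma_{B_i}$ by the finite-window condition $y_k\notin\bigcup_i V(B_i)$ for some $k\ge n$. The spectral estimate for $\widehat A$ is then routine, the passage from $\lambda_0$ to $\lambda_0'$ absorbing any polynomial Jordan-block factors.
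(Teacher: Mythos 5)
Your proof is correct and relies on the same ingredients as the paper's: the cylinder formula of Lemma~4.4, Proposition~2.4 (no transit between maximal components), and the fact that the submatrix of $A$ on non-maximal vertices has spectral radius strictly below $\lambda$. You organise the argument a little differently — you reduce the total-variation distance directly to $\widehat\nu\bigl(Y\setminus\sigma^{-n}(\bigcup_i\Sigma_{B_i})\bigr)$ and then sum $\widehat\nu(D_k)$ over $k\ge n$, whereas the paper estimates $\widehat\nu(\bigcup_{j>n}A_j)$ and treats $\widehat\nu(Y\setminus\bigcup_i Y_i)=0$ as a separate step — but the underlying estimate is identical. The one genuine addition you make, and which the paper's proof leaves implicit, is the support analysis of $p(\textbf{1})$: namely that a non-maximal vertex $v$ with $p(\textbf{1})_v\neq 0$ cannot be reached from any maximal component, so all length-$k$ paths $\ast\to v$ live in the non-maximal submatrix. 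This observation is exactly what handles paths that visit a maximal component and then exit (their terminal cylinders have zero $\widehat\nu$-mass), and it is also what justifies the paper's unelaborated assertion that $\widehat\nu(Y\setminus\bigcup_i Y_i)=0$.
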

\begin{proof}
We claim that
$$ \widehat{\nu}\left(\bigcup_{j> n} A_j\right) \to 0$$
exponentially quickly as $n\to\infty$. To see this, note that the number of length $n$ paths in $\mathcal{G}$ that start at $\ast$ and do not enter a maximal component is $O((\lambda -\delta)^n)$ for some $0<\delta < \lambda $. Combining this observation with Lemma $4.4$ implies that there exists $C>0$ independent of $j,n$ such that
$$ \widehat{\nu}\left(\bigcup_{j> n} A_j\right) \le C \sum_{j> n} \left(\frac{\lambda-\delta}{\lambda}\right)^j.$$
This proves the claim. Along with Lemma $4.4$, this shows that $Y \backslash \cup_{i=1}^m Y_i$ can be written as a countable union of zero $\widehat{\nu}$ measure sets. Hence $\widehat{\nu}\left(Y\backslash\cup_{i=1}^m Y_i\right) =0$ and for any $E \subset Y$,
$$ \widehat{\nu}(E) - \widehat{\nu}_n(E) = \widehat{\nu}\left(E\cap \bigcup_{j> n} A_j\right) \le \widehat{\nu} \left(\bigcup_{j> n} A_j\right).$$
Applying the claim a further time concludes the proof.
\end{proof}
We end this section by observing that, for any $E \subset \cup_i \Sigma_{B_i}$,
\begin{equation}
\sigma^j_\ast \widehat{\nu}(E) =\sigma^j_\ast\widehat{\nu}_j(E).
\end{equation}
We are now ready to prove our results.
\section{Proofs of Results}
Throughout the rest of the paper, suppose that $\varphi: G \to \mathbb{R}$ satisfies Condition $(1)$ and Condition $(2)$ and let $f: \Sigma_A \to \mathbb{R}$ be the function related to $\varphi$. Fix a bounded subset $H \subset C(G)$ (i.e. $\sup_{g\in H}\{|g|\} < \infty$).  \\ \indent

We begin by noting that Theorem $1.2$ is equivalent to the fact that there exists $\Lambda \in \mathbb{R}$ for which the set
$$\mathcal{U}_\Lambda = \left\{[\widetilde\gamma] \in \partial G: \lim_{n\to\infty} \frac{\varphi(\widetilde{\gamma}_n)}{n} = \Lambda\right\}, $$
is well-defined and has full $\nu$ measure.
\begin{lemma}
For any $\Lambda \in \mathbb{R}$ the set  $\mathcal{U}_\Lambda$ is well-defined and $G$-invariant.
\end{lemma}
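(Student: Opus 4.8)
The plan is to obtain both assertions as immediate consequences of Condition $(2)$ together with the definition of the equivalence relation on geodesic rays; essentially no geometric input beyond these is needed. Fix $K>0$ so that $\varphi$ is $K$-Lipschitz with respect to \emph{both} the left and the right word metrics on $G$, and recall that $d_L(g,h)=|g^{-1}h|$ and $d_R(g,h)=|gh^{-1}|$. The two halves of the statement will use the two halves of Condition $(2)$: well-definedness uses Lipschitz continuity in $d_L$, while invariance uses Lipschitz continuity in $d_R$.

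For well-definedness, suppose $\gamma$ and $\gamma'$ are infinite geodesic rays with $[\gamma]=[\gamma']$. By the definition of $\partial G$ this means $M:=\sup_{n\ge 0}d_L(\gamma_n,\gamma_n')$ is finite. Condition $(2)$ then yields $|\varphi(\gamma_n)-\varphi(\gamma_n')|\le KM$ for every $n$, and hence $\bigl|\varphi(\gamma_n)/n-\varphi(\gamma_n')/n\bigr|\le KM/n\to 0$. Consequently $\lim_{n\to\infty}\varphi(\gamma_n)/n$ exists and equals $\Lambda$ exactly when $\lim_{n\to\infty}\varphi(\gamma_n')/n$ does; so membership of $[\widetilde\gamma]$ in $\mathcal{U}_\Lambda$ is independent of the chosen representative, and $\mathcal{U}_\Lambda$ is well-defined. (Note that this argument never used the basepoints of $\gam$ and $\gamma'$, only that they lie in a common equivalence class.)

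For $G$-invariance it suffices to prove $g\cdot\mathcal{U}_\Lambda\subseteq\mathcal{U}_\Lambda$ for each $g\in G$, since applying this inclusion to $g$ and to $g^{-1}$ forces equality. Fix $[\widetilde\gamma]\in\mathcal{U}_\Lambda$ and a representative ray $\gamma$ based at the identity. Left translation by $g$ is an isometry of $C(G)$, so $g\gamma:=(g\gamma_n)_{n\ge 0}$ is again a geodesic ray, and by the definition of the boundary action it represents $g\cdot[\widetilde\gamma]$. The key observation is that $g\gamma_n\gamma_n^{-1}=g$ for all $n$, so $d_R(g\gamma_n,\gamma_n)=|g|$ is bounded independently of $n$; Condition $(2)$ then gives $|\varphi(g\gamma_n)-\varphi(\gamma_n)|\le K|g|$, whence
$$\frac{\varphi(g\gamma_n)}{n}=\frac{\varphi(\gamma_n)}{n}+O\!\left(\frac{1}{n}\right)\longrightarrow\Lambda \qquad (n\to\infty).$$
Since $\mathcal{U}_\Lambda$ is well-defined, we may evaluate its defining limit along the representative $g\gamma$, and this shows $g\cdot[\widetilde\gamma]\in\mathcal{U}_\Lambda$, as desired.

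The only point that really deserves care is the step in the invariance argument where we compute the limit along the single ray $g\gamma$: this is legitimate precisely because the first part shows the condition defining $\mathcal{U}_\Lambda$ does not depend on the representative, and because the excerpt already records that $[\gamma]\mapsto[g\gamma]$ is the well-defined action of $G$ on $\partial G$, with $g\gamma$ lying in the class $g\cdot[\widetilde\gamma]$. Granting those facts, the proof reduces to the two Lipschitz estimates above, so I do not anticipate any substantial obstacle.
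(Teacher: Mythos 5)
Your proof is correct and takes essentially the same route as the paper: well-definedness follows from Condition $(2)$'s Lipschitz continuity in the left word metric (since equivalent rays stay a bounded $d_L$-distance apart), and $G$-invariance follows from Lipschitz continuity in the right word metric (since $d_R(g\gamma_n,\gamma_n)=|g|$ is constant in $n$). The paper states these two observations in the opposite order and more tersely, but the argument is the same.
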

\begin{proof}
Since $\varphi$ is Lipschitz in the right word metric, if $[\gamma] \in \partial G$ and $g \in G$, then there exists $C>0$ for which
$$|\varphi(\gamma_n) - \varphi(g\gamma_n)| \le C|g|$$
uniformly for $n\in\mathbb{Z}_{\ge0}$. Hence 
$$\lim_{n\to\infty} \frac{\varphi(\gamma_n)}{n} = \Lambda \hspace{2mm} \text{ if and only if } \hspace{2mm} \lim_{n\to\infty} \frac{\varphi(g\gamma_n)}{n}=\Lambda.$$
This proves $G$-invariance assuming that $\mathcal{U}_\Lambda$ is well-defined. To prove that $\mathcal{U}_\Lambda$ is well-defined we can follow the same argument as above, this time using that $\varphi$ is Lipschitz in the left word metric.
\end{proof}
We are now ready to prove Theorem $1.2$.
\begin{proof} [Proof of Theorem $1.2$]
Since the action of $G$ on $\partial G$ is ergodic with respect to $\nu$, it suffices, by Lemma $5.1$, to prove that there exists $\Lambda$ for which $\mathcal{U}_\Lambda$ has positive $\nu$ measure. Consider a maximal component $B_i$. By the ergodic theorem, $\mu(E_\Lambda) >0,$ where
$$E_\Lambda =\left\{y \in \Sigma_{B_i} : \frac{f^n(y)}{n}  \to \Lambda \text{ as } n\to \infty\right\}$$
and $\Lambda = \int_{\Sigma_{B_i}} f \ d\mu_i$. Hence by Proposition $4.6$ there exists $k \in \mathbb{Z}_{\ge 0}$ for which $\sigma_\ast^k\widehat{\nu}(E_\Lambda)>0$. We now note that if $y \in E_\Lambda$ and $x \in \bigcup_{n\ge0} \sigma^{-n}(\{y\}) $ then
$$\lim_{n\to\infty} \frac{f^n(x)}{n} \to \Lambda $$
as $n\to\infty$. Hence, 
$$\widehat{\nu}\left\{y \in Y: \frac{f^n(y)}{n}\to \Lambda \text{ as } n\to\infty\right\} \ge \sigma_\ast^k\widehat{\nu}(E_\Lambda) >0.$$
By Condition $(1)$, for $y \in Y,$ $f^n(y_n) = \varphi(h(y)_n) + O(1)$ where the implied constant is independent of both $n$ and $y$. Combining this with the fact that $h_\ast\widehat{\nu} =\nu$ implies that $\nu\left(\mathcal{U}_\Lambda\right) >0$ and thus concludes the proof.
\end{proof}
We now move on to the proof of Theorem $1.4$. By replacing $\varphi(\cdot)$ with $\varphi(\cdot) - \Lambda|\cdot|$ and $f(\cdot)$ with $f(\cdot) - \Lambda$, it suffices to prove Theorem $1.4$ under the assumption that $\Lambda =0$. We will assume this from now on. \\ \indent
The intuition behind our proof of Theorem $1.4$ is the following. By Proposition $4.6$, $\mu$ is obtained from averaging the pushforwards of $\widehat{\nu}$. If we could therefore, in some sense, reverse this averaging and express $\widehat{\nu}$ in terms of $\mu$, then we could use our knowledge of $\mu$ to learn about $\widehat{\nu}$. The relationship between these measures is particularly nice and allows us carry out such a procedure. \\ \indent Recall that we want to study the convergence of the following distributions.
\begin{definition}
Define, for $n\in\mathbb{Z}_{\ge0}$ and $x\in\mathbb{R}$,
$$R_n(x) = \nu \left\{ [\widetilde{\gamma}] \in \partial G : \text{ for all } \gamma \in [\widetilde{\gamma}] \text{ with } \gamma_0 \in H, \frac{\varphi(\gamma_n)}{\sqrt{n}} \le x \right\}$$
and 
$$N(x,\sigma) = \frac{1}{\sqrt{2 \pi } \sigma} \int_{-\infty}^x e^{-t^2/2\sigma} \ dt.$$
\end{definition}
\noindent We want to prove that there exists $\sigma^2 \ge 0$ for which
$$\|R_n(x) - N(x,\sigma)\|_\infty = O(n^{-1/4})$$
as $n\to\infty$. To simplify notation we will express this as $R_n = N(\sigma) + O(n^{-1/4})$.
We will use the following fact multiple times.
\begin{lemma}
Let $F_n,H_n: \mathbb{R} \to \mathbb{R}$ be sequences of distributions and suppose that $k_n, l_n$ are sequences of integers with $k_n \to \infty$ and $l_n \to \infty$ as $n \to \infty$. Suppose further that there exists a constant $C>0$ independent of $n$ and $x$ such that
$$H_n(x - Cl_n^{-1}) \le F_n(x) \le H_n(x+Cl_n^{-1}),$$
for all $n,x$. Then, if $H_n = N(\sigma) +O(k_n^{-1})$, we have that $F_n = N(\sigma) +O(k_n^{-1},l_n^{-1})$.
\end{lemma}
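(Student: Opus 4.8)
The plan is to push the sandwich inequality through the hypothesis $H_n = N(\sigma) + O(k_n^{-1})$ and to absorb the $l_n^{-1}$-sized horizontal shift using the Lipschitz continuity of the Gaussian distribution function. The statement is essentially a bookkeeping lemma, so the proof will be short; the only thing to watch is uniformity of all implied constants in $x$.

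First I would record that $N(\cdot,\sigma)$ is globally Lipschitz. Indeed, its derivative is the density $t \mapsto \frac{1}{\sqrt{2\pi}\sigma} e^{-t^2/2\sigma}$, which is bounded above by $\frac{1}{\sqrt{2\pi}\sigma}$. Hence there is a constant $C' > 0$, depending only on $\sigma$, such that $|N(x+h,\sigma) - N(x,\sigma)| \le C'|h|$ for all $x,h \in \mathbb{R}$; in particular $N(x \pm Cl_n^{-1},\sigma) = N(x,\sigma) + O(l_n^{-1})$ uniformly in $x$. Next I would combine this with the two hypotheses. From the right-hand side of the sandwich together with $H_n = N(\sigma) + O(k_n^{-1})$ (whose implied constant is, by assumption, uniform in $x$),
$$F_n(x) \le H_n(x + Cl_n^{-1}) = N(x + Cl_n^{-1},\sigma) + O(k_n^{-1}) = N(x,\sigma) + O(k_n^{-1}) + O(l_n^{-1}),$$
and symmetrically, from the left-hand side, $F_n(x) \ge N(x - Cl_n^{-1},\sigma) + O(k_n^{-1}) = N(x,\sigma) + O(k_n^{-1}) + O(l_n^{-1})$, all implied constants being independent of $n$ and $x$. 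Since $O(k_n^{-1}) + O(l_n^{-1}) = O(\max\{k_n^{-1},l_n^{-1}\}) = O(k_n^{-1},l_n^{-1})$ in the notation of the introduction, we obtain $\|F_n - N(\sigma)\|_\infty = O(k_n^{-1},l_n^{-1})$, i.e. $F_n = N(\sigma) + O(k_n^{-1},l_n^{-1})$.

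I do not expect any substantive obstacle: the one delicate point is that the Lipschitz constant $C'$ for $N(\cdot,\sigma)$ must be independent of $x$ (it is, because the Gaussian density is globally bounded) and that the error in $H_n = N(\sigma) + O(k_n^{-1})$ is uniform in $x$ (this is built into the hypothesis, matching the uniformity clause of Theorem $1.4$). The roles of $k_n,l_n \to \infty$ are only to guarantee that both error terms tend to $0$, so that the resulting statement is meaningful; they play no further part. This lemma will then be applied repeatedly to replace quantities of the form $\varphi(\gamma_n)$ by nearby symbolic quantities $f^n(\cdot)$ without having to track the comparison error beyond its order of magnitude.
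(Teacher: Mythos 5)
Your proof is correct and is exactly the argument the paper has in mind: the paper's own proof consists of the single sentence that the derivative of $N(\sigma)$ is uniformly bounded, and your write-up simply unpacks that observation into the sandwich-plus-Lipschitz calculation.
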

\begin{proof}
This is a simple consequence of the fact that the derivative of $N(\sigma)$ is uniformly bounded.
\end{proof}
Our aim is to construct a sequence of distributions on $Y$ with respect to $\widehat{\nu}$ from which we can gain an understanding of the $R_n.$ The following two lemmas are the first step in achieving this. The first lemma is an easy consequence of the hyperbolicity of $G$ and so we exclude the proof.
\begin{lemma}
There exists $C>0$ such that
$$\sup_{\substack{\gamma,\gamma' \in [\widetilde{\gamma}] \\ \gamma_0,\gamma'_0 \in H}} \sup_{n\in \mathbb{Z}_{\ge 0}} \{ d_L(\gamma_n, \gamma'_n)\} < C$$
uniformly for $[\widetilde{\gamma}] \in \partial G$.
\end{lemma}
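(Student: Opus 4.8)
The plan is to read the statement as a uniform fellow-travelling estimate for asymptotic geodesic rays whose basepoints lie a bounded distance apart, and to extract it from the thinness of \emph{ideal} geodesic triangles. Set $R = \sup_{g\in H}|g| < \infty$, so that any two rays $\gamma,\gamma'$ with $\gamma_0,\gamma_0'\in H$ satisfy $d_L(\gamma_0,\gamma_0')\le 2R$. Since $C(G)$ is a proper, geodesic, $\delta$-hyperbolic space, for a fixed boundary point $\xi=[\widetilde{\gamma}]$ the rays $\gamma$ and $\gamma'$ are geodesic rays from $\gamma_0$ and $\gamma_0'$ respectively to $\xi$, so together with a geodesic segment $\eta:=[\gamma_0,\gamma_0']$ (of length $\le 2R$) they bound an ideal geodesic triangle $\Delta$ with one ideal vertex. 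The key input, which I would quote, is the standard fact that ideal geodesic triangles in a $\delta$-hyperbolic space are $\delta'$-thin for a constant $\delta'=\delta'(\delta)$ depending only on $\delta$ (see e.g. \cite{gh}).

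Granting this, fix $n\ge 0$. The point $\gamma_n$ lies on the side $\gamma$ of $\Delta$, hence is within $\delta'$ of $\eta\cup\gamma'$. If $\gamma_n$ is within $\delta'$ of a point $p$ of $\eta$, then $n=d_L(\gamma_0,\gamma_n)\le d_L(\gamma_0,p)+\delta'\le 2R+\delta'$, and therefore $d_L(\gamma_n,\gamma_n')\le d_L(\gamma_n,\gamma_0)+d_L(\gamma_0,\gamma_0')+d_L(\gamma_0',\gamma_n')=2n+2R\le 6R+2\delta'$. Otherwise $\gamma_n$ is within $\delta'$ of $\gamma_m'$ for some $m\ge 0$; comparing distances to $\gamma_0$ gives $|n-d_L(\gamma_0,\gamma_m')|\le\delta'$, and comparing to $\gamma_0'$ gives $|m-d_L(\gamma_0,\gamma_m')|\le 2R$, so $|n-m|\le 2R+\delta'$ and $d_L(\gamma_n,\gamma_n')\le d_L(\gamma_n,\gamma_m')+d_L(\gamma_m',\gamma_n')\le\delta'+|n-m|\le 2R+2\delta'$. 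In either case $d_L(\gamma_n,\gamma_n')\le 6R+2\delta'$, a bound independent of $n$, of the choice of $\gamma,\gamma'$, and of $[\widetilde{\gamma}]$; one then takes $C=6R+2\delta'+1$.

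The only genuinely non-routine point is the uniform thinness of $\Delta$, so that is the step I would be most careful about. A naive attempt using only finite triangles together with the a priori (non-uniform) fellow-travelling constant of $\gamma$ and $\gamma'$ produces a bound depending on that constant, hence not uniform; this is why one must route through ideal triangles. The standard resolution decomposes ideal-triangle thinness into two uniform facts: (i) any two geodesic rays issuing from a common point and converging to the same boundary point are $2\delta$-fellow travellers — proved by applying the thin-triangle condition to $(x_0,\gamma(N),\gamma'(N'))$ for large $N,N'$ and using that $(\gamma(N)\mid\gamma'(N'))_{x_0}\to\infty$ forces the comparison point to lie on the rays rather than on the far side; and (ii) ordinary geodesic triangles are $\delta$-thin. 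Splicing a geodesic $[\gamma_0',\gamma_N]$ (for $N$ large) against the ray $\gamma'$ via (i) and against $\gamma$ via (ii), and then letting $N\to\infty$, gives the $\delta'(\delta)$-thinness of $\Delta$ with $\delta'$ depending only on $\delta$. Either way the final constant depends only on $\delta$ and on $\sup_{g\in H}|g|$, which is precisely the uniformity asserted.
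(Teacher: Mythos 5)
The paper gives no proof of this lemma, stating only that it is an easy consequence of the hyperbolicity of $G$; there is therefore no ``paper approach'' to compare against. Your argument is a correct and fully quantified version of exactly that remark: you reduce to uniform thinness of the ideal triangle with vertices $\gamma_0$, $\gamma_0'$, $\xi$, then split into the two cases (nearest point on $\eta$ or on $\gamma'$) to obtain the explicit bound $6R+2\delta'$ with $R=\sup_{g\in H}|g|$, which is precisely the required uniformity over $[\widetilde{\gamma}] \in \partial G$. Your reduction of ideal-triangle thinness to the $2\delta$-fellow-travelling of co-based asymptotic rays plus $\delta$-thinness of finite triangles, via a limiting geodesic $[\gamma_0',\gamma_N]$ with $N\to\infty$, is the standard argument and is sound. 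One small point worth tightening: in the second case the nearest point of $\gamma'$ to $\gamma_n$ need not be a vertex $\gamma_m'$ but may lie in the interior of an edge of $C(G)$; replacing $\delta'$ by $\delta'+1$ absorbs this and leaves the rest of the computation unchanged.
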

Using this lemma we obtain.
\begin{lemma}
Define, for $n\in \mathbb{Z}_{\ge0}$ and $x\in \mathbb{R}$,
$$\widetilde{R}_n(x)= \nu \left\{ [\widetilde{\gamma}] \in \partial G : \text{ for some } \gamma \in [\widetilde{\gamma}] \text{ with } \gamma_0 \in H, \frac{\varphi(\gamma_n)}{\sqrt{n}} \le x \right\}.$$
Then, if $\widetilde{R}_n = N(\sigma) +O(n^{-1/4}),$ we have that $R_n = N(\sigma) +O(n^{-1/4})$.
\end{lemma}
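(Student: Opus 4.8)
The plan is to sandwich $R_n$ between two translates of $\widetilde{R}_n$ and then invoke Lemma $5.4$. The distributions $R_n$ and $\widetilde{R}_n$ differ only in that $R_n$ demands the inequality $\varphi(\gamma_n)/\sqrt{n} \le x$ along \emph{every} representative $\gamma \in [\widetilde{\gamma}]$ with $\gamma_0 \in H$, whereas $\widetilde{R}_n$ demands it along \emph{some} such representative. The key observation is that, for a fixed $[\widetilde{\gamma}]$, the numbers $\varphi(\gamma_n)$ ranging over admissible $\gamma$ all lie in an interval whose length is bounded independently of $n$ and of $[\widetilde{\gamma}]$.

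First I would record this uniform bound. By Lemma $5.5$ there is a constant $C>0$ such that $d_L(\gamma_n,\gamma'_n) < C$ for all $n$ whenever $\gamma,\gamma' \in [\widetilde{\gamma}]$ and $\gamma_0,\gamma'_0 \in H$. Since $\varphi$ is Lipschitz in the left word metric by Condition $(2)$, there is a constant $C_0>0$, depending only on $\varphi$, $S$ and $H$, with $|\varphi(\gamma_n)-\varphi(\gamma'_n)| \le C_0$ for all such $\gamma,\gamma'$ and all $n$. I would also note that, since $H \ne \emptyset$ and $C(G)$ is a proper geodesic space, every $[\widetilde{\gamma}] \in \partial G$ admits at least one representative $\gamma$ with $\gamma_0 \in H$: a geodesic ray from any fixed vertex to a boundary point lies in the equivalence class of that point.

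Combining these facts: if some $\gamma \in [\widetilde{\gamma}]$ with $\gamma_0 \in H$ satisfies $\varphi(\gamma_n)/\sqrt{n} \le x$, then every $\gamma' \in [\widetilde{\gamma}]$ with $\gamma'_0 \in H$ satisfies $\varphi(\gamma'_n)/\sqrt{n} \le x + C_0/\sqrt{n}$; conversely the ``for all'' condition at level $x$ trivially implies the ``for some'' condition at level $x$. Passing to $\nu$-measures, and using that every boundary point has a representative based in $H$ so that the ``for all'' event is contained in the ``for some'' event, this yields
$$\widetilde{R}_n\!\left(x - \tfrac{C_0}{\sqrt{n}}\right) \le R_n(x) \le \widetilde{R}_n(x) \le \widetilde{R}_n\!\left(x + \tfrac{C_0}{\sqrt{n}}\right)$$
for all $n \in \mathbb{Z}_{\ge 0}$ and all $x \in \mathbb{R}$, where the last inequality is monotonicity of $\widetilde{R}_n$.

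Finally I would apply Lemma $5.4$ with $F_n = R_n$, $H_n = \widetilde{R}_n$, $l_n = \sqrt{n}$ and $k_n = n^{1/4}$: the hypothesis $\widetilde{R}_n = N(\sigma) + O(n^{-1/4})$ then gives $R_n = N(\sigma) + O(n^{-1/4}, n^{-1/2}) = N(\sigma) + O(n^{-1/4})$, as required. The only non-formal inputs are Lemma $5.5$ and the left-Lipschitz property, so there is no substantial obstacle; the single point requiring care is the bookkeeping of the inequality directions together with the remark that the ``for all'' event really is contained in the ``for some'' event, which is exactly where one uses that every boundary point has a representative based in $H$.
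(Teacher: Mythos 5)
Your proof is correct and takes the same route as the paper: sandwich $R_n$ between translates of $\widetilde{R}_n$ using the uniform $d_L$-bound together with the left-Lipschitz property from Condition $(2)$, then apply the distribution comparison lemma. Two small points: your lemma references are each shifted up by one (the hyperbolicity bound is Lemma 5.4 and the distribution comparison is Lemma 5.3), and your explicit observation that every $[\widetilde{\gamma}] \in \partial G$ admits a representative based in $H$ — needed so that the ``for all'' event is genuinely contained in the ``for some'' event — is a worthwhile clarification of the inequality $R_n(x) \le \widetilde{R}_n(x)$ that the paper simply labels ``clearly.''
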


\begin{proof}
Clearly $R_n(x) \le \widetilde{R}_n(x)$ for all $x \in \mathbb{R}$ and $n\in\mathbb{Z}_{\ge0}$. Also, by the previous lemma and the fact that $\varphi$ is Lipschitz in the $d_L$ metric, there exists $C>0$ independent of $x$ and $n$ such that
$$\widetilde{R}_n(x-Cn^{-1/2}) \le R_n(x),$$
for all $x,n$. Combining these two bounds and applying Lemma $5.3$ concludes the proof.
\end{proof}
The previous two lemmas show that, without loss of generality, we may assume that the identity element of $G$ belongs to $H$. We will assume this from 
now on. We can now construct distributions on $Y$ from which we can deduce the convergence of $R_n$. Recall that given $y \in  Y$, $h(y)_n$ for $n\in\mathbb{Z}_{\ge0}$ denotes the $n$th group element in the geodesic ray determined by $y$.
\begin{definition}
Define distributions
$$H_n(x) = \widehat{\nu}\left\{ y \in \bigcup_i Y_i : \frac{\varphi(h(y)_n)}{\sqrt{n}} \le x \right\}$$
for $n\in \mathbb{Z}_{\ge 0}$ and $x\in\mathbb{R}$.
\end{definition}
The following lemma shows that to prove Theorem $1.4$, it suffices to prove the analogous statement for the distributions $H_n$.
\begin{lemma}
If $H_n = N(\sigma) +O(n^{-1/4})$ then $R_n = N(\sigma) +O(n^{-1/4})$.
\end{lemma}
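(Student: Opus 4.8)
The plan is to relate the distribution $R_n$ (or rather $\widetilde R_n$, which by Lemma~5.5 suffices) to $H_n$ by comparing the measures $\nu$ on $\partial G$ and $\widehat\nu$ on $Y$ via the correspondence $h_\ast\widehat\nu = \nu$. First I would use Condition $(1)$: for $y \in Y$ we have $f^n(y) = \varphi(h(y)_n) + O(1)$ with the implied constant uniform in $n$ and $y$. Combined with the assumption that $e \in H$ (justified after Lemma~5.5), this means that the event $\{[\widetilde\gamma] : \varphi(\gamma_n)/\sqrt n \le x \text{ for some } \gamma \in [\widetilde\gamma], \gamma_0 \in H\}$ pulls back under $h$ to a subset of $Y$ that, up to an $O(n^{-1/2})$ shift in the variable $x$, agrees with $\{y \in Y : f^n(y)/\sqrt n \le x\}$ — and hence with $\{y \in Y : \varphi(h(y)_n)/\sqrt n \le x\}$ after another $O(n^{-1/2})$ shift.

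Next I would replace $\widehat\nu$ restricted to all of $Y$ by $\widehat\nu$ restricted to $\bigcup_i Y_i$, which is the content of how $H_n$ is defined. By the discussion following Lemma~4.4 (inside the proof of Lemma~4.8), $\widehat\nu(Y \setminus \bigcup_i Y_i) = 0$, so in fact $\widehat\nu\{y \in Y : \varphi(h(y)_n)/\sqrt n \le x\} = \widehat\nu\{y \in \bigcup_i Y_i : \varphi(h(y)_n)/\sqrt n \le x\} = H_n(x)$ exactly, with no error at all. So the only genuine losses are the two $O(n^{-1/2})$ shifts in $x$ coming from the $O(1)$ discrepancy between $f^n$ and $\varphi \circ h$ and from passing between "for some $\gamma$" and a fixed representative. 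These are absorbed by Lemma~5.3 with $l_n = n^{1/2}$ and $k_n = n^{1/4}$: if $H_n = N(\sigma) + O(n^{-1/4})$, then $\widetilde R_n = N(\sigma) + O(n^{-1/4}, n^{-1/2}) = N(\sigma) + O(n^{-1/4})$, and then Lemma~5.5 upgrades this to $R_n = N(\sigma) + O(n^{-1/4})$.

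Concretely, the chain of implications is: $H_n = N(\sigma) + O(n^{-1/4})$ $\Rightarrow$ (via the identity $\widehat\nu$-a.s.\ $= H_n$, the shift by $O(1)/\sqrt n$ from Condition~$(1)$, and Lemma~5.3) the distribution $\widehat\nu\{y \in Y : \varphi(h(y)_n)/\sqrt n \le x\} = N(\sigma) + O(n^{-1/4})$ $\Rightarrow$ (pushing forward under $h$, using $h_\ast\widehat\nu = \nu$ and Lemma~5.4's bound between this and $\widetilde R_n$, again with Lemma~5.3) $\widetilde R_n = N(\sigma) + O(n^{-1/4})$ $\Rightarrow$ (Lemma~5.5) $R_n = N(\sigma) + O(n^{-1/4})$.

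The only subtlety — and the step I would be most careful about — is the bookkeeping for which geodesic representative $\gamma \in [\widetilde\gamma]$ the map $h$ selects versus the "for some $\gamma$ with $\gamma_0 \in H$" quantifier in $\widetilde R_n$. The map $h$ picks out the canonical geodesic starting at $\ast$ (i.e.\ at the identity), whose endpoints are $h(y)_n = $ the prefix of length $n$; since we have reduced to $e \in H$, this canonical ray is one of the competitors in the "for some $\gamma$" event, giving one inclusion, while Lemma~5.4 (all representatives with basepoint in $H$ fellow-travel within $d_L$-distance $C$) together with the $d_L$-Lipschitz property of $\varphi$ gives the reverse inclusion up to an $O(n^{-1/2})$ shift. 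Everything else is a routine application of the already-established Lemmas~5.3, 5.4 and 5.5 and Condition~$(1)$; there is no new analytic input at this stage, the real work being deferred to the proof that $H_n = N(\sigma) + O(n^{-1/4})$.
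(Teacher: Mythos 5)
Your argument is essentially the same as the paper's: sandwich $\widetilde R_n(x)$ between $H_n(x)$ and $H_n(x+Kn^{-1/2})$ using the canonical ray picked out by $h$, the fellow-travelling estimate of Lemma~5.4, and the Lipschitz property of $\varphi$ in $d_L$; then absorb the $O(n^{-1/2})$ shift via Lemma~5.3 and pass from $\widetilde R_n$ to $R_n$ via Lemma~5.5. Your ``subtlety'' paragraph correctly identifies the two-sided inclusion at the heart of the proof and matches the paper's reasoning (the paper records this as the chain $H_n(x)\le\widehat\nu(h^{-1}(\cdots))\le H_n(x+Kn^{-1/2})$, citing surjectivity of $h$ from Calegari's notes).

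One inaccuracy worth flagging: the detour through Condition~$(1)$ and $f^n(y)$ is not needed here. Definition~5.6 gives $H_n$ directly in terms of $\varphi(h(y)_n)$, not $f^n(y)$, so no comparison between $f^n$ and $\varphi$ enters this lemma at all; that comparison is invoked later, in the final step of the proof of Theorem~1.4, to relate $H_n$ to $\widehat\nu(E_n(x))$. Your detour does not break anything, since the extra $O(n^{-1/2})$ shift you introduce is harmless, and your ``Concretely'' paragraph even acknowledges that the identity is exact $\widehat\nu$-a.s.\@ once one restricts to $\bigcup_i Y_i$; but invoking Condition~$(1)$ here and then also claiming an exact equality is internally inconsistent bookkeeping. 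You also omit the surjectivity of $h$, which the paper cites, but that plays only a supporting role in the inequality $H_n(x)\le \widehat\nu(h^{-1}(\cdots))$.
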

\begin{proof}
It is proven in \cite{calnotes} that $h$ is surjective, see Lemma $3.5.1$. Hence there exists $K>0$ independent of $n,x$ such that
\begin{align*}
H_n(x) &\le  \widehat{\nu} \left(h^{-1}\left\{ [\widetilde{\gamma}] \in \partial G : \text{ for some } \gamma \in [\widetilde{\gamma}] \text{ with } \gamma_0 \in H, \frac{\varphi(\gamma_n)}{\sqrt{n}} \le x \right\}\right)\\
&\le H_n(x+Kn^{-1/2}),
\end{align*}
for all $n \in \mathbb{Z}_{\ge 0}$ and $x \in \mathbb{R}$. Since $h_\ast\widehat{\nu} = \nu$,
$$  \widehat{\nu} \left( h^{-1}\left\{ [\widetilde{\gamma}] \in \partial G : \text{ for some } \gamma \in [\widetilde{\gamma}] \text{ with } \gamma_0 \in H, \frac{\varphi(\gamma_n)}{\sqrt{n}} \le x \right\}\right) = \widetilde{R}_n(x)$$ 
and applying Lemmas $5.3$ and $5.4$ completes the proof.
\end{proof}
The next step is to study the $H_n$. We do this by constructing distributions on $\cup_i \Sigma_{B_i}$ with respect to $\mu$ and then, by relating $\mu$ to $\widehat{\nu}$, use these to understand the $H_n$ distributions. To simplify notation, we define, for $x\in \mathbb{R}$ and $n\in\mathbb{Z}_{\ge0}$,
$$E_n(x) = \left\{ y \in \bigcup_i Y_i: \frac{f^n(y)}{\sqrt{n}} \le x \right\} \subset Y.$$
The following lemma along with Proposition $4.6$ will allow us to compare the $\widehat{\nu}$ and $\mu$ measures.
\begin{lemma}
For any sequence of integers $k_n$ such that $k_n\to\infty$ as $n\to\infty$,
$$ \frac{1}{k_n} \sum_{j=0}^{k_n} \widehat{\nu}_j(E_n(x)) = \widehat{\nu}(E_n(x)) +O(k_n^{-1}),$$
where the implied constant is independent of $n,x$.
\end{lemma}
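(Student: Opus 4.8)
The plan is to deduce this directly from the exponential total variation estimate of Lemma $4.8$, so that the statement becomes essentially a bookkeeping exercise. First I would observe that $E_n(x) \subset Y$ is a measurable subset of $\Sigma_{A'}$, so Lemma $4.8$ supplies a constant $C>0$ and a number $0<\theta<1$, both independent of $n$ and $x$, such that
$$\left|\widehat{\nu}_j(E_n(x)) - \widehat{\nu}(E_n(x))\right| \le \left\|\widehat{\nu}_j - \widehat{\nu}\right\|_{TV} \le C\theta^j$$
for every $j \in \mathbb{Z}_{\ge 0}$. This is the only point at which the definition of the sets $A_j$ and of the truncated measures $\widehat{\nu}_j$ is used.

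Next I would sum this bound over $j = 0, 1, \dots, k_n$ and divide by $k_n$. The contribution of the main term is $\frac{k_n+1}{k_n}\widehat{\nu}(E_n(x))$; since $\widehat{\nu}$ pushes forward under $h$ to the Patterson--Sullivan probability measure $\nu$, it is itself a probability measure, so $\widehat{\nu}(E_n(x)) \le 1$ and hence $\frac{k_n+1}{k_n}\widehat{\nu}(E_n(x)) = \widehat{\nu}(E_n(x)) + O(k_n^{-1})$ with an absolute implied constant. The contribution of the error terms is bounded by $\frac{1}{k_n}\sum_{j=0}^{k_n} C\theta^j \le \frac{C}{k_n(1-\theta)} = O(k_n^{-1})$, once again with implied constant independent of $n$ and $x$, because a convergent geometric series is bounded. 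Combining these two estimates yields the claimed identity, with the implied constant depending only on $C$ and $\theta$ from Lemma $4.8$.

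There is no genuine obstacle here; the only point requiring a moment's care is the uniformity of the implied constant in $n$ and $x$, and this is automatic since the bound in Lemma $4.8$ is phrased in total variation and therefore applies to the sets $E_n(x)$ for all $n$ and $x$ at once. In short, the lemma is a formal consequence of Lemma $4.8$ together with the fact that $\widehat{\nu}$ is a probability measure and that $\sum_{j\ge 0}\theta^j < \infty$.
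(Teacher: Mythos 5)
Your proof is correct and takes essentially the same approach as the paper: apply Lemma~4.8 to get the uniform exponential bound $|\widehat{\nu}_j(E_n(x)) - \widehat{\nu}(E_n(x))| \le C\theta^j$ and then average, summing the geometric series. The paper states this in a single terse sentence, while you have simply written out the $\frac{k_n+1}{k_n}$ normalisation and the uniformity in $n,x$ explicitly.
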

\begin{proof}
By Lemma $4.8$ there exists $0<\theta<1$ such that for each $j \in \mathbb{Z}_{\ge 0}$, 
$$\widehat{\nu}_j(E_{n}(x)) = \widehat{\nu}(E_n(x)) + O(\theta^j),$$ where the implied constant is independent of $j$, $n$ and $x$. Taking the average of $\widehat{\nu}_1(E_{n}(x)),...,\widehat{\nu}_{k_n}(E_n(x))$ and letting $n\to\infty$ gives the result.
\end{proof}
We now, using work from \cite{me}, describe how $f$ distributes over $\Sigma_A$ with respect to the measure $\mu$. Along with the previous lemma, this will allow us to deduce the convergence of the $H_n$ distributions.
\begin{proposition}
There exists $\sigma^2 \ge 0$ such that for each $x \in \mathbb{R}$,
$$\mu\left\{y \in \bigcup_i \Sigma_{B_i}: \frac{f^n(y)}{\sqrt{n}} \le x\right\} =N(x,\sigma) + O(n^{-1/2})$$
as $n\to\infty$ and the above error term is uniform in $x \in \mathbb{R}$. Furthermore, $\sigma^2>0$ if and only if 
$$\left\{[\gamma] \in \partial G: \{\varphi(\gamma_n):n\in\mathbb{Z}_{\ge0}\} \text{ is unbounded}\right\}$$
is non-empty.
\end{proposition}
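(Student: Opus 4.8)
The plan is to pass to the maximal components of $A$ and apply, componentwise, the Berry--Esseen-type central limit theorem for subshifts recorded in Section $2$.

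First I would invoke Proposition $4.2$. Since $\mu=\sum_{i=1}^m\alpha_i\mu_i$ with every $\alpha_i>0$ and $\mu$ is carried by $\bigcup_i\Sigma_{B_i}$,
$$\mu\Big\{y\in\bigcup_i\Sigma_{B_i}:\frac{f^n(y)}{\sqrt n}\le x\Big\}=\sum_{i=1}^m\alpha_i\,\mu_i\Big\{y\in\Sigma_{B_i}:\frac{f^n(y)}{\sqrt n}\le x\Big\}.$$
Each $B_i$ is irreducible and $f|_{\Sigma_{B_i}}\in F_\theta(\Sigma_{B_i})$, so the only hypothesis to check before applying the Section $2$ result is $\int_{\Sigma_{B_i}}f\,d\mu_i=0$. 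This holds because we have reduced to $\Lambda=0$: running the argument from the proof of Theorem $1.2$ with an arbitrary maximal component $B_i$ shows that $\mathcal{U}_{\Lambda'}$, with $\Lambda'=\int_{\Sigma_{B_i}}f\,d\mu_i$, has positive and hence (by ergodicity) full $\nu$-measure, and two such sets are disjoint unless their constants agree, forcing $\int_{\Sigma_{B_i}}f\,d\mu_i=\Lambda=0$. The Section $2$ theorem then yields, for each $i$, a constant $\sigma_i^2\ge 0$ with
$$\mu_i\Big\{y\in\Sigma_{B_i}:\frac{f^n(y)}{\sqrt n}\le x\Big\}=N(x,\sigma_i)+O(n^{-1/2})$$
uniformly in $x$.

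Summing, $\mu\{f^n/\sqrt n\le x\}=\sum_i\alpha_i N(x,\sigma_i)+O(n^{-1/2})$ uniformly in $x$. As the error is $o(1)$ and the other two expressions are independent of $n$, the asserted form $N(x,\sigma)+O(n^{-1/2})$ can hold only if $\sum_i\alpha_i N(x,\sigma_i)=N(x,\sigma)$ for all $x$; and a non-trivial convex combination of mean-zero Gaussian distribution functions is a single Gaussian distribution function only when all the variances coincide. So the crux is to prove $\sigma_1^2=\dots=\sigma_m^2=:\sigma^2$, and I expect this to be the main obstacle. This is exactly where Condition $(2)$ is essential -- Condition $(1)$ alone already delivers the componentwise statements -- since the Lipschitz control on $\varphi$ is what couples the asymptotics of $f$ on the pairwise non-communicating maximal components. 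I would derive the equality of the $\sigma_i^2$ from the study of $\varphi$ on $G$ in \cite{me}, where the central limit theorem for $\varphi$ is proved with a single variance $\sigma^2$: the agreement of the component variances is just the consistency of that theorem with Proposition $4.2$, and I would isolate it as a lemma taken from \cite{me}. Granting it, $\sum_i\alpha_i N(x,\sigma_i)=N(x,\sigma)$ and the displayed asymptotic follows (in the degenerate case $\sigma^2=0$ one reads $N(\cdot,0)$ as a step function; the substantive case, and the one used in Theorem $1.4$, is $\sigma^2>0$).

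For the final equivalence: $\sigma^2=0$ iff every $\sigma_i^2=0$, which by the coboundary criterion in Section $2$ means $f=h_i\circ\sigma-h_i$ on $\Sigma_{B_i}$ for continuous $h_i$, equivalently $\sup_{n}\sup_{y\in\Sigma_{B_i}}|f^n(y)|<\infty$ for each $i$. I would transfer this to $G$ via Condition $(1)$, which gives $\varphi(h(y)_n)=f^n(y)+O(1)$ uniformly for $y\in Y$, together with the surjectivity of $h$. If some $\sigma_i^2>0$, then $f$ is not a coboundary on $\Sigma_{B_i}$, so by Livsic's theorem there is a periodic $z=\sigma^n z$ in $\Sigma_{B_i}$ with $f^n(z)\neq 0$; prepending a path from $\ast$ to $z_0$ produces $y\in Y_i$ along which $f^m(y)$, hence $\varphi(h(y)_m)$, is unbounded, so the set in the statement is non-empty. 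Conversely, if all $\sigma_i^2=0$ then $|f^n|$ is uniformly bounded on $\bigcup_i\Sigma_{B_i}$, so $\{\varphi(\gamma_n)\}$ is bounded along $\nu$-almost every geodesic ray; since $\{[\gamma]:\{\varphi(\gamma_n)\}\text{ unbounded}\}$ is $G$-invariant (as in Lemma $5.1$, using Condition $(2)$) and $\nu$ is ergodic it has $\nu$-measure zero, and one checks, again using \cite{me}, that a $G$-invariant Borel set of this kind cannot be a non-empty $\nu$-null set. The delicate point here is geodesic rays whose codings eventually remain in a \emph{non}-maximal component, for which the coboundary property of $f$ is not given a priori; this is the other place where I would rely on \cite{me}.
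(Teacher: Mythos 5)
Your proposal is correct and follows essentially the same route as the paper: decompose $\mu$ as $\sum_i \alpha_i \mu_i$ via Proposition 4.2, apply the Berry--Esseen theorem for subshifts of finite type componentwise, reduce to the equality of the component means and variances (the paper cites Proposition 6.2 of \cite{me} for this), and invoke Lemma 7.2 and Proposition 7.7 of \cite{me} for the positive-variance criterion --- precisely the two places where you also defer to \cite{me}. Your ergodicity argument forcing $\int_{\Sigma_{B_i}} f\, d\mu_i = \Lambda$ for every maximal component $B_i$ is a pleasant self-contained substitute for the mean part of that citation, but otherwise you have located exactly the inputs the paper outsources.
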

\begin{proof}
By Proposition $4.2$, the measure $\mu$ is a weighted sum of the measures of maximal entropy $\mu_i$ on each maximal component $B_i$. We obtain a central limit theorem, with mean $\Lambda_i$ and variance $\sigma_i$, for $\mu_i$ and $f$ on each $\Sigma_{B_i}$. Proposition $6.2$ from \cite{me} uses an argument of Calegari and Fujiwara to show that $\Lambda_i$ and $\sigma_i$ do not depend on the maximal component $B_i$ (and by assumption $\Lambda_i =0$ for each $i=1,...,m$). From this and the Berry-Esseen Theorem for subshifts of finite type \cite{cp} we obtain the desired central limit theorem, with error term, for $\mu$ and $f$. The criteria for positive variance follows from Lemma $7.2$ and Proposition $7.7$ of \cite{me}.
\end{proof}
We are now ready to prove Theorem $1.4$.

\begin{proof} [Proof of Theorem $1.4$]
By Lemma $5.7$ it suffices to prove that for $x \in \mathbb{R}$
$$ H_n(x) = N(x,\sigma)  +O(n^{-1/4})$$
as $n\to\infty$.\\ \indent
We begin by applying Proposition $4.6$ and Proposition $5.9$ to deduce that for any integer valued sequence $k_n$, with $k_n\to\infty$ as $n\to\infty$,
\begin{equation}
\frac{1}{k_n} \sum_{j=0}^{k_n} \sigma_\ast^j \widehat{\nu}\left\{y\in \bigcup_i \Sigma_{B_i} : \frac{f^n(y)}{\sqrt{n}} \le x \right\} =N(x,\sigma)  +O(k_n^{-1}, n^{-1/2}), 
\end{equation}
as $n\to\infty$, uniformly for $x \in\mathbb{R}$. We then define, for $n\in \mathbb{Z}_{\ge0}$ and $x \in \mathbb{R}$, 
$$ C_{n}^{\pm}(x) = \left\{y \in \bigcup_i \Sigma_{B_i}: \frac{f^n(y)}{\sqrt{n}} \le x \pm \frac{2k_n|f|_\infty}{\sqrt{n}} \right\}.$$
If we suppose further that $k_n = o(\sqrt{n})$, then expression $(5.1)$ implies that
\begin{equation}
\frac{1}{k_n}\sum_{j=0}^{k_n} \sigma^j_\ast\widehat{\nu}(C_{n}^{\pm}(x)) = N(x,\sigma)  + O(k_n n^{-1/2},k_n^{-1}).
\end{equation}
We now note that, by containment,
\begin{equation}
\sigma^j_\ast\widehat{\nu}_j(C_n^-(x)) \le \widehat{\nu}_j(E_{n}(x)) \le \sigma^j_\ast\widehat{\nu}_j(C_n^+(x))
\end{equation}
for all $n$, $j \le k_n$ and $x$. Recall that, by $(4.2)$, $\sigma^j_\ast\widehat{\nu}(C_n^{\pm}(x)) = \sigma^j_\ast\widehat{\nu}_j(C_n^{\pm}(x))$ for all $n,x$. 
Hence, if we choose $k_n = \lfloor n^{1/4} \rfloor$, then $(5.2)$ along with inequality $(5.3)$ imply that
$$\frac{1}{k_n}\sum_{j=0}^{k_n} \widehat{\nu}_j(E_{n}(x)) = N(x,\sigma)  + O(n^{-1/4})$$ 
and so by Lemma $5.8$, 
$$ \widehat{\nu} (E_n(x))= N(x,\sigma)  + O(n^{-1/4}).$$
Lastly, using Lemma $5.3$ and the fact that, for $y \in Y,$ $f^n(y_n) = \varphi(h(y)_n) + O(1)$, it is easy to see that
$$H_n(x) =  \widehat{\nu}(E_n(x)) + O(n^{-1/2}) =  N(x,\sigma)  + O(n^{-1/4}),$$
concluding the proof.
\end{proof}

\begin{remark}
The $O(n^{-1/4})$ error term arises due to the fact that $\nu$ is supported on $Y$ whereas $\mu$ is supported $\cup_i \Sigma_{B_i}$. To pass the central limit theorem in Proposition $5.9$ to one for $\nu$ and $Y$, we need to compare the values $f$ takes on $Y$ to the values $f$ takes on  $\cup_i \Sigma_{B_i}$. This comparison introduces an error term that can be seen explicitly as the $2k_n|f|_\infty n^{-1/2}$ terms in the sets $C_{n}^\pm(x)$. In the case that $A$ is aperiodic (or irreducible) this term is no longer needed since for any $y\in Y$, $\sigma(y)$ belongs to the only (necessarily maximal) component.
\end{remark}
In \cite{bow}, Bowen and Series provide a geometrical condition for Fuchsian groups and their generating sets that guarantees the existence of a coding $\Sigma_A$ described by an aperiodic matrix. This condition is satisfied by the fundamental groups of compact hyperbolic surfaces (i.e. surface groups) with presentation 
$$\langle a_1,\ldots,a_g,b_1,\ldots,b_g | \prod_{j=1}^g [a_j,b_j] \rangle$$
where $g \ge 2$ is the genus of the surface. Free groups equipped with their canonical generating set also satisfy this condition. The above remark then implies the following.
\begin{corollary}
If $G$ and $\varphi:G \to \mathbb{R}$ satisfy the hypotheses of Theorem $1.4$ and $G$ is a free group or surface group equipped with the generating set described above, then the error term in Theorem $1.4$ can be improved to $O(n^{-1/2})$. 
\end{corollary}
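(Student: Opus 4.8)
The plan is to re-run the proof of Theorem $1.4$, exploiting the structural collapse that occurs when the coding is aperiodic, as foreshadowed in Remark $5.11$. By the theorem of Bowen and Series quoted above (\cite{bow}), a free group or surface group with the stated generating set admits a coding $\Sigma_A$ in which $A$ is aperiodic; I would fix such a coding. Then the matrix obtained from $A$ by deleting the rows and columns of $0$ and $\ast$ is irreducible, so it has a single irreducible component, which must be the unique maximal component $B_1$. In particular $\mu=\mu_1$ is the measure of maximal entropy on $\Sigma_{B_1}$ and $\sigma^2$ is the variance attached to $f$ and $\mu_1$ by Proposition $5.9$ (positive exactly under the hypothesis of Theorem $1.4$). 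Moreover every edge out of $\ast$ goes to a vertex of $B_1$, so every $y\in Y$ already lies in $A_1$; hence $A_j=\emptyset$ for $j\neq 1$, $A_1=Y$, and therefore $\widehat{\nu}_j=\widehat{\nu}$ for every $j\geq 1$.

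The consequence is that the averaging over the range $j\leq k_n$ in the proof of Theorem $1.4$ becomes unnecessary: it suffices to work with the single pushforward $\sigma^1_\ast\widehat{\nu}$. Indeed, for $y\in Y$ we have $\sigma(y)\in\Sigma_{B_1}$ and $|f^n(y)-f^n(\sigma y)|\leq 2|f|_\infty$, so the containment argument behind $(5.3)$, now using $\widehat{\nu}_1=\widehat{\nu}$, gives for all $n$ and $x$
$$
\sigma^1_\ast\widehat{\nu}\left\{z\in\Sigma_{B_1}:\frac{f^n(z)}{\sqrt n}\leq x-\frac{2|f|_\infty}{\sqrt n}\right\}\leq\widehat{\nu}\bigl(E_n(x)\bigr)\leq\sigma^1_\ast\widehat{\nu}\left\{z\in\Sigma_{B_1}:\frac{f^n(z)}{\sqrt n}\leq x+\frac{2|f|_\infty}{\sqrt n}\right\}.
$$
The shifts here are $O(n^{-1/2})$, in place of the $O(k_n n^{-1/2})$ built into the sets $C_n^\pm$ in the general argument, so once the outer terms are controlled, Lemma $5.3$ will absorb them into an $O(n^{-1/2})$ error. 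Everything thus reduces to a Berry--Esseen estimate for $f$ with respect to the measure $\sigma^1_\ast\widehat{\nu}$.

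To obtain that estimate I would argue as follows. By Lemma $4.7$, $\sigma^1_\ast\widehat{\nu}$ is absolutely continuous with respect to $\mu$ with density $\sum_v\alpha_v^1\mathbf{1}_{[v]}$; equivalently, a short computation from Lemma $4.3$ identifies $\sigma^1_\ast\widehat{\nu}$ as the Markov measure on $\Sigma_{B_1}$ with the same transition matrix as $\mu$ but with initial distribution supported on the successors of $\ast$. Either way the density is constant on one-cylinders, hence Lipschitz in $d_\theta$, and has integral $1$. The Berry--Esseen theorem of \cite{cp}, being proved by the Nagaev--Guivarc'h spectral method, holds verbatim with such an initial density in place of $\mu$: the transfer operator of $\mu$ contracts the density to the constant $1$ at an exponential rate, so the initial distribution affects only the $O(1)$ corrections to the mean and variance of $f^n$ and hence contributes $O(n^{-1/2})$ after rescaling. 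One thereby gets $\sigma^1_\ast\widehat{\nu}\{f^n/\sqrt n\leq x\}=N(x,\sigma)+O(n^{-1/2})$ uniformly in $x$, with the same $\sigma$. Feeding this into the displayed inequalities yields $\widehat{\nu}(E_n(x))=N(x,\sigma)+O(n^{-1/2})$, and then, just as at the close of the proof of Theorem $1.4$ (via Lemma $5.3$, Lemma $5.7$, and the comparison $f^n(y)=\varphi(h(y)_n)+O(1)$ coming from Condition $(1)$), one reaches $R_n=N(\sigma)+O(n^{-1/2})$.

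The main obstacle is exactly this last ingredient: the excerpt records the Berry--Esseen theorem only for the measure of maximal entropy, so one must either invoke the standard extension to Lipschitz (equivalently, for Markov measures, arbitrary) initial distributions, or re-derive it by running the spectral argument of \cite{cp} with a perturbed initial functional. If one wishes to avoid this, there is a cheaper route that still improves on $O(n^{-1/4})$ but delivers only $O(n^{-1/2}\log n)$: in the aperiodic case $\|\sigma^j_\ast\widehat{\nu}-\mu\|_{TV}=O(\theta^{j})$ for some $0<\theta<1$ (because the coefficients $\alpha_v^j$ tend to $1$ exponentially, not merely in an averaged sense), so choosing $j_n\sim C\log n$ with $C$ large makes that error $O(n^{-1/2})$ while the shift inherited from $(5.3)$ grows only to $O(j_n n^{-1/2})=O(n^{-1/2}\log n)$. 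This is why the $j=1$ route, together with the Berry--Esseen input above, is the one I would pursue to reach the sharp bound.
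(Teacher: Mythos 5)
Your argument is correct, and it is in fact more careful than what the paper offers: the paper gives no proof of the corollary beyond the sentence ``The above remark then implies the following,'' and Remark~5.11 itself, if read literally, does not quite do the job. The remark suggests that aperiodicity lets one replace $2k_n|f|_\infty$ by $2|f|_\infty$ in the sets $C_n^\pm(x)$ while otherwise running the averaged argument of Theorem~1.4 unchanged. But the $k_n$ factor in the shift is needed to make the containment~(5.3) hold for \emph{all} $j\le k_n$: the quantity being absorbed is $|f^n(y)-f^n(\sigma^j y)|\le 2j|f|_\infty$, and this grows with $j$ regardless of whether $A$ is aperiodic. Aperiodicity kills the need for the $A_j$ with $j>1$ (indeed $A_1=Y$ and $\widehat{\nu}_j=\widehat{\nu}$), but it does not shrink the shift needed when comparing $\widehat{\nu}$ against the $j$th pushforward. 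So the averaged route, as written, still produces an $O(k_n n^{-1/2},k_n^{-1})$ error.

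You correctly identify what aperiodicity really buys and where the genuine work lies. The structural collapse (unique maximal component $B_1$, $\mu=\mu_1$, $Y=A_1$) means the single pushforward $\sigma^1_\ast\widehat{\nu}$ is already supported on $\Sigma_{B_1}$, and by Lemma~4.7 it is absolutely continuous with respect to $\mu$ with density $\sum_v\alpha_v^1\mathbf{1}_{[v]}$, which is constant on one-cylinders and hence Lipschitz. Dropping the averaging entirely and sandwiching $\widehat{\nu}(E_n(x))$ between $\sigma^1_\ast\widehat{\nu}$-measures with an $O(n^{-1/2})$ shift then reduces everything to a Berry--Esseen estimate for $f^n$ under $\sigma^1_\ast\widehat{\nu}$. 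The paper records the Berry--Esseen theorem of~\cite{cp} only for the measure of maximal entropy, and you rightly flag that the extension to a Lipschitz initial density is the missing analytic input; your justification---the Nagaev--Guivarc'h spectral argument is insensitive to the choice of Lipschitz test density, which merely shifts the mean and variance of $f^n$ by $O(1)$---is exactly right. Your fallback route via $\|\sigma^j_\ast\widehat{\nu}-\mu\|_{TV}=O(\theta^j)$ and $j_n\sim C\log n$ giving $O(n^{-1/2}\log n)$ is also valid and a useful calibration of how much the spectral input actually buys. In short, you and the paper rely on the same observation about aperiodicity, but you supply the rigorous Berry--Esseen step that Remark~5.11 passes over.
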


\begin{remark}
It seems plausible that the optimal error term in Theorem $1.4$ is $O(n^{-1/2})$. The author has not pursued this however.
\end{remark}

\end{document}